\documentclass[12pt,a4paper]{amsart}
\usepackage{amsmath, amsfonts, amssymb,amsthm}
\usepackage{graphicx}
\usepackage{mathrsfs}
\usepackage{hyperref}
\usepackage{color}
\usepackage[dvipsnames]{xcolor}
\usepackage{etoolbox}
\usepackage{enumitem}

\newtheorem{thm}{Theorem}[section]
\newtheorem{defi}[thm]{Definition}
\newtheorem{pro}[thm]{Proposition}

\newtheorem{lem}[thm]{Lemma}

\numberwithin{equation}{section}
\usepackage{hyperref}

\newcommand{\R}{\mathbb{R}}

\newcommand{\D}{\displaystyle}
\def\qq#1{\qquad \mbox{#1}\quad}

\newcommand{\al}{\alpha}

\newcommand{\de}{\delta}

\newcommand{\Ga}{\Gamma}
\newcommand{\la}{\lambda}

\newcommand{\Om}{\Omega}
\newcommand{\Omb}{\overline{\Om}}
\newcommand{\p}{\partial}

\title[Uniform a  priori estimates]{Uniform a  priori estimates for slightly subcritical fractional problems}
\author{Juan-Carlos Felipe-Navarro}
\address{J.C. Felipe-Navarro
\newline
Departamento de An\'alisis Matem\'atico y Matem\'atica Aplicada,  Universidad Complutense de Madrid, 28040--Madrid, Spain.}
\email{jcfelipe@ucm.es}

\author{Rosa Pardo}
\address{R.~Pardo
\newline
Departamento de An\'alisis Matem\'atico y Matem\'atica Aplicada,  Universidad Complutense de Madrid, 28040--Madrid, Spain.}
\email{rpardo@ucm.es}
\thanks{The first author is supported by the Spanish grants PID2021-123903NB-I00 and RED2022-134784-T funded through MCIN/AEI/10.13039/501100011033 and ERDF “A way of making Europe”.
The second author is supported by grant PID2022-137074NB-I00,  MICINN,  Spain. Furthermore, both authors are supported by the Spanish grant PID2025-168009NB-I00 and are members of CADEDIF-UCM Research Group (920894).}
\date{}

\begin{document}

\begin{abstract}
We study the uniform $L^\infty(\Om)$ \emph{a priori} boundedness of positive weak solutions to the fractional semilinear Dirichlet problem $(-\Delta)^s u = f(u)$ in a bounded, convex, $C^{1,1}$ domain $\Omega \subset \mathbb{R}^N$ with homogeneous exterior conditions $u\equiv 0$ in $\R^N\setminus\Om$. We consider {\it slightly superlinear} nonlinearities of the form $f(t) = t^q L(t)$, where $1 \le q \le \frac{N+2s}{N-2s}$ and $L$ is a slowly varying function. Although uniform estimates are well-established in the strictly subcritical regime ($q < \frac{N+2s}{N-2s}$), the slightly subcritical case ($q = \frac{N+2s}{N-2s}$) is highly challenging due to the potential formation of bubbling profiles. 

In this work, we isolate a structural condition on the slowly varying perturbation, namely
$$
\lim_{t \to \infty} \frac{t \, |L'(t)|}{L^{\frac{N}{2s}}(t)} = \infty,
$$
which acts as an asymptotic barrier that prevents mass concentration. Under this assumption, we establish global uniform $L^\infty(\Om)$ bounds for positive solutions, significantly expanding the class of known nonlinearities for which such estimates hold.
\end{abstract}

\maketitle

\keywords{Key words and phrases: {\it a priori} estimates,  fractional Laplacian, positive solutions, critical Sobolev exponent,
Pohozaev identity.}

\subjclass{	
	MSC2020 classification: 
	35B45, 
	35B09, 
	35B33, 
 	47G20. 
}

\maketitle

\section{Introduction}

The study of a priori bounds for positive solutions to elliptic equations is a longstanding open problem in nonlinear analysis, primarily due to its role in establishing existence results via topological degree theory and fixed-point theorems. In this work, we treat the uniform $L^\infty(\Om)$ boundedness of positive weak solutions to the fractional Dirichlet problem
\begin{equation}\label{eq:ell:pb}
	\begin{cases}
		(-\Delta)^s u = f(u), &\qquad \text{in }\, \Om,  \\
		\qquad\ \, \ u = 0, &\qquad  \text{in }\, \R^N\setminus\Om,
	\end{cases}
\end{equation}
where $s \in (0,1)$, $N > 2s$, and $\Omega \subset \mathbb{R}^N$ is a bounded convex domain with a $C^{1,1}$ boundary. 

Here, the fractional Laplacian, $(-\Delta)^s$, is an integro-differential operator defined as
$$ (-\Delta)^s \, u(x) = C_{N,s}\ \mbox{P.V.}\int_{\R^N} \frac{u(x) - u(y)}{|x - y|^{N+2s}}\ dy, $$
where P.V. stands for the Cauchy principal value, and $C_{N,s}$ is a positive normalization constant depending only on the dimension $N$ and the fractional power $s$ that plays no role in our analysis.

In particular, we focus on nonlinearities $f$ that exhibit slightly subcritical growth. That is,
$$ 
\lim_{t \to \infty} \frac{f(t)}{t^{2^*_s-1}} = 0, 
$$
in terms of the fractional Sobolev exponent $2^*_s := \frac{2N}{N-2s}$.

In this context, a function $u \in H_0^s(\Omega)$ is said to be a weak solution to \eqref{eq:ell:pb} if
$$ \frac{C_{N,s}}{2} \iint_{\mathbb{R}^{2N}} \frac{(u(x)-u(y))(\phi(x)-\phi(y))}{|x-y|^{N+2s}} \, dx \, dy = \int_{\Omega} f(u(x)) \, \phi(x) \, dx,$$
for every test function $\phi \in H_0^s(\Omega)$, where
\begin{align*}
	H_0^s(\Omega) = \Big\{ &w\in L^2(\Omega) \ : \ w\equiv 0 \ \text{ in } \R^N\setminus \Omega \  \text{ and } \ [u]_{H^s(\mathbb{R}^N)} < +\infty  \Big\}.
\end{align*}
Here, the fractional Sobolev seminorm is given by
$$ [u]_{H^s(\Omega)}^2:=\iint_{\Omega \times \Omega} \frac{|w(x)-w(y)|^2}{|x-y|^{N+2s}} dx\,dy. $$

A fundamental point in this theory is the distinction between the boundedness of a single solution and the uniform boundedness of the entire set of solutions. While the individual estimates provide the regularity of solutions, the importance of uniform $L^\infty(\Om)$ estimates lies in their application to fixed-point theory and topological degree methods. For instance, uniform bounds are the key ingredient in defining the Leray-Schauder degree \cite{Leray-Schauder}, and ensuring the existence of positive solutions in both classical and fractional frameworks; see \cite[\S 2]{deF_Lion_Nus_1982, Cossio_Herron_Velez} and
\cite{Chen_Li_Li_IJM_2016, Barrios_DelPezzo_GMelian_Quaas}, respectively.

In the local problem for the Laplacian, thanks to the Caldéron-Zygmund and Schauder theories, it is enough to show that weak solutions are in $L^q$ for every $q\in [1,+\infty)$ to guarantee the regularity. For $N=1$ and $N=2$, the Sobolev embeddings ($H^1 \hookrightarrow C^{0,1/2}$ and $H^1 \hookrightarrow L^q$ for all $q < \infty$, respectively) are strong enough to provide the desired inclusion. However, for $N > 2$, the critical Sobolev exponent~$2^*-1 = \frac{N+2}{N-2}$ marks a structural boundary. That is, under the growth condition
$$ \limsup_{t \to \infty} \frac{f(t)}{t^{2^*-1}} < \infty,  $$
the $L^q$-integrability of weak solutions follows from the classical results of Moser \cite{Moser} and Br\'ezis and Kato \cite{BrezisKato1979} (see also \cite{Marino-Winkert_2019} and \cite[Appendix~B]{Struwe2008} for modern treatments).

The critical exponent  is not only an obstruction for the regularity of solutions but also a fundamental threshold for their existence. This phenomenon is deeply linked to the geometry of the domain through the Pohozaev identity \cite{Pohozaev}. Indeed, if $f(u) = |u|^{p-1}u$ with $p > 2^*-1$, the identity ensures that the only solution to the semilinear problem in a star-shaped domain is the trivial one, $u \equiv 0$.

Regarding the uniform bounds, Gidas and Spruck \cite{Gidas_Spruck_bd} established uniform a priori bounds in the subcritical regime
$$\lim_{t \to \infty} \frac{f(x,t)}{t^q}=h(x) \quad \text{uniformly in }x\in\Omb,\ \text{for some } q \in [0, 2^*-1),$$
where $h$ is continuous and strictly positive in $\Omb,$
by using the blow-up method. This technique relies on assuming the existence of an unbounded sequence of positive solutions $u_k$ to reach a contradiction via non-existence results. More precisely, one defines the rescaled sequence
$$
v_k(y) := M_k^{-1} u_k(x_k + \delta_k y), \quad \text{ in } \quad \Omega_k := \frac{1}{\delta_k}(\Omega - x_k),
$$
with $M_k=u_k(x_k)=\|u_k\|_\infty$, and  either $\de_k=\big(\frac{M_k}{f(M_k)}\big)^{1/2}$
or $\de_k=M_k^{-\frac{p-1}{2}}$. In the subcritical regime, passing to the limit maps the problem onto the whole space $\mathbb{R}^N$ or the half-space $\mathbb{R}^N_+$, where non-existence results (frequently known as Liouville-type theorems), ensures that the only solution is $v \equiv 0$ , which contradicts the normalization $v_k(0) = 1$. Nevertheless, this method fails at the critical exponent (as well as in the slightly subcritical regime) due to the appearance of non-trivial global solutions of the equations $-\Delta v = v^{2^*-1}$ or $-\Delta v = 0$ in $\mathbb{R}^N$. In the first case, Gidas, Ni, and Nirenberg \cite{Gidas_Ni_Nirenberg} proved that the only classical positive solutions with finite energy are the so-called standard bubbles (or Talenti functions)
\begin{equation*}
U_{\la}(x) = \la^{N-2}\,(\lambda^2 + |x|^2)^{-\frac{N-2}{2}}, \quad \la = \sqrt{N(N-2)}.
\end{equation*}
In the second case, the Liouville Theorem for harmonic functions \cite{Axler_Bourdon_Ramey, Nelson} dictates that $v \equiv 1$. In either scenario, such explicit solutions allow the $L^\infty$ norm to escape to infinity while maintaining a constant energy profile, thereby breaking the classical non-existence contradiction argument.

In \cite{deF_Lion_Nus_1982}, de Figueiredo, Lions, and Nussbaum obtained uniform a priori bounds by employing a completely different strategy under the structural hypothesis
\begin{equation} \label{CondDFLN}
    \liminf_{t \to +\infty} \frac{\theta \,F(t)-t\,f(t)}{t^2\,f(t)^{2/N}} \geq 0, \qquad \text{for some } \, \theta \in [0,2^*),
\end{equation}
where
$$  F(t) = \int_0^t f(\tau) \, d\tau.$$
They first establish a uniform $L^\infty(\Om)$ bound in a neighborhood of the boundary by combining a uniform $L^1$ estimate in the spirit of Br\'ezis and Turner \cite{Brezis_Turner} with the moving planes method of Gidas, Ni, and Nirenberg \cite{Gidas_Ni_Nirenberg}. Next, they exploit the Pohozaev identity to derive a global uniform $H^1_0(\Om)$ bound. Then, a bootstrap device due to Br\'ezis and Kato \cite{Brezis_Kato} is applied to elevate this energy estimate into a uniform $L^q$ bound for every $q < +\infty$. Finally, standard elliptic regularity concludes the uniform $L^\infty(\Om)$ estimate.

Although condition~\eqref{CondDFLN} does not cover the ``slightly subcritical'' regime, the authors conjectured that uniform bounds should still hold for such growth rates. This conjecture has been extensively addressed by the second author and collaborators (see \cite{Castro_Pardo_RMC_2015, Cuesta_Pardo_MedJM}). By estimating the radius of a ball where the solution exceed half of its $L^\infty(\Om)$ norm, we show that for certain perturbations of the critical power by slowly varying functions
$$ f(t)=t^{\tfrac{N+2}{N-2}}\,L(t),$$
the mass concentration required for bubbling is impossible, establishing then the uniform estimates.

The mathematical framework to describe these asymptotic perturbations is provided by the theory of regular variation in the sense of Karamata~\cite{Karamata}. 
\begin{defi} \label{Def:RVandSV}
A measurable positive function $g: (0, \infty) \to (0, \infty)$ is said to be a regularly varying function (at infinity) of index $q \in \mathbb{R}$ if
\begin{equation}\label{reva}
\lim_{t \to +\infty} \frac{g(\tau t)}{g(t)} = \tau^q, \quad \text{ for all } \ \tau > 0.    
\end{equation}
In this case, we write $g \in RV_q$. In particular, regularly varying functions of index $0$ are called slowly varying (at infinity), and the class is denoted by $SV$.
\end{defi}

These classes, introduced in the seminal work of Karamata \cite{Karamata}, allows for a systematic treatment of nonlinearities that deviate from pure power laws by logarithmic or iterated logarithmic factors, such as $L(t) = (\ln t)^\beta$ or $L(t) = (\ln(\ln t))^\beta$. For an exhaustive account of the properties of these functions, we refer the reader to the classical monographs \cite{Bingham, Resnick, Seneta}. In the context of nonlinear partial differential equations, this framework has proven to be extremely fruitful for analyzing problems with critical or nearly-critical growth (see, for instance, \cite{Costa_Quoirin_Tehrani, G-Melian_Iturriaga_RQuoirin}).
\medskip

The transition from local to non-local diffusion introduces significant analytical hurdles. As in the classical case, the regularity of each solution to problem~\eqref{eq:ell:pb} is well-understood. Weak solutions $u \in H_0^s(\Omega)$ to problems with critical growth are actually bounded and smooth. The step from $H_0^s(\Omega)$ to $L^\infty(\Omega)$ for the critical case can be found in \cite[Proposition 2.2]{Barrios_Colorado_Servadei_Soria}, while the subsequent Hölder regularity up to the boundary and interior classical regularity follow from the results by Ros-Oton and Serra \cite{RosOton_Serra_reg}.

Concerning the existence of uniform estimates in the non-local setting, the critical exponent $2^*_s$ marks the threshold for mass concentration. For strictly subcritical growth, the classical blow-up method of Gidas and Spruck has been successfully adapted to fractional equations \cite{Chen_Li_Li_AM_2017, Barrios_DelPezzo_GMelian_Quaas, Barrios_DelPezzo_GMelian_Quaas_DCDS_2017} as well as to systems of fractional equations \cite{Quaas_Xia_Nonl_2016, Lin_DCDS_2019, Wang_Niu}. As previously described in the classical scenario, non-existence results (usually known as Liouville-type theorems)  and the existence of bubbles   the crucial ingredients and the main obstructions, respectively, for this approach; see \cite{Chen_Li_Ou2006, Quaas_Xia_CVPDE, Jin_Li_Xiong_2014}.

In this paper, we extend to the fractional Laplacian the philosophy of \cite{Castro_Pardo_RMC_2015, Cuesta_Pardo_MedJM} to treat the ``almost critical'' case, which has remained largely unexplored. This method allows us to show that the interaction between the non-local operator and the slowly varying perturbation $L(t)$ prevents the formation of the aforementioned bubbling profiles. 

We will assume the following hypotheses for the nonlinearity 
$$f(t) = t^{2^*_s-1}\,L(t).$$

\begin{enumerate}[label=\textbf{(f\arabic*)$_\infty$}]
	\item
	\label{f1}
	The nonlinearity $f$  is {\it slightly subcritical} at infinity, in the sense that
	$${\D\lim_{t\to +\infty}}\,\frac{f(t)}{t^{\frac{N+2s}{N-2s}}}=0.$$
	
	\item 
	\label{f2}
	The nonlinearity $f$ is {\it slightly superlinear} at infinity in the sense of de Figueiredo, Lions and Nussbaum (see \cite[p. 43]{deF_Lion_Nus_1982}). That is,
	\begin{equation*}
		{\D\liminf_{t \to +\infty}}\, \frac{f(t)}{t}> \la_{1,s},
	\end{equation*}
	where $\la_{1,s}$ is the first eigenvalue of the fractional Laplacian in $\Om$ with homogeneous exterior Dirichlet condition.
	\smallskip 
	
	\item
	\label{f3}  The function $L$ is 
	$C^1(0,+\infty),$  and $L'$  satisfies:
	\begin{align*}
		&L'(t)< 0,\quad \hbox{ for all }  t\ge t_1,  \hbox{ with some } t_1\gg 1,\label{L':neg}\\
        \text{ and } \\
		&|L'|\in RV_{-1}, \text{ that is,} \ {\D\lim_{t\to +\infty}}\, \frac{|L'(\tau t)|}{|L'(t)|} =\frac{1}{\tau}\qquad\forall  \tau >0.\nonumber
	\end{align*}
\end{enumerate}

Our main result shows that the growth condition on the slowly varying part $L$ acts as an asymptotic friction that prevents the concentration of solutions.  

\begin{thm}\label{th:apriori:intro}
    Given a bounded $C^{1,1}$ convex domain $\Omega \subset \mathbb{R}^N$, let $s \in (0,1)$ and $N > 2s$.  Assume that the nonlinearity $f(t) = t^{\frac{N+2s}{N-2s}} L(t)$ satisfies hypotheses {\rm \ref{f1}--\ref{f3}} and the structural condition:
	\begin{equation}\label{hypothesis1:intro}
		\lim_{t \to +\infty} \frac{t \, |L'(t)|}{L^{\frac{N}{2s}}(t)} = +\infty.
	\end{equation}
	Then, there exists a constant $C > 0$, depending only on $N, s, \Omega$, and $f$, such that any positive weak solution $u \in H_0^s(\Omega)$ to \eqref{eq:ell:pb} satisfies
	\begin{equation*}
		\|u\|_{L^\infty(\Omega)} \le C.
	\end{equation*}
\end{thm}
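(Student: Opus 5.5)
We argue by contradiction, following the scheme of Castro--Pardo and Cuesta--Pardo adapted to the non-local setting. Suppose there is a sequence of positive weak solutions $u_k$ of \eqref{eq:ell:pb} with $M_k:=\|u_k\|_{L^\infty(\Omega)}\to+\infty$. The first step is a set of uniform estimates independent of $k$. (i) A Brezis--Turner type $L^1_\delta$ bound $\int_\Omega f(u_k)\,\delta^s\,dx\le C$, where $\delta=\mathrm{dist}(\cdot,\partial\Omega)$. To get it, test with the first eigenfunction $\varphi_{1,s}\asymp\delta^s$; by \ref{f2}, $f(t)\ge(\lambda_{1,s}+\epsilon)t-C$, which yields $\int u_k\varphi_{1,s}\le C$ and then $\int f(u_k)\varphi_{1,s}\le C$. (ii) A uniform $L^\infty$ bound near $\partial\Omega$. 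Convexity lets us use the fractional moving planes method (Chen--Li--Li, in the direct form for the fractional Laplacian), which shows that $u_k$ is monotone along inward normal directions in a boundary strip of fixed width. Hence each boundary point value is controlled by averages over a cone of fixed size, which are bounded by (i). Thus $\max_{\Omega_\rho}u_k\le C$ on $\Omega_\rho=\{\delta<\rho\}$, and the maxima $x_k$ stay at distance at least $\rho$ from $\partial\Omega$.

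The second step is the Pohozaev identity of Ros-Oton--Serra:
\[
\int_\Omega\Big(NF(u)-\tfrac{N-2s}{2}uf(u)\Big)dx=\tfrac{\Gamma(1+s)^2}{2}\int_{\partial\Omega}\Big(\tfrac{u}{\delta^s}\Big)^2(x\cdot\nu)\,d\sigma .
\]
Since $u_k$ is uniformly bounded in the boundary strip, interior/boundary regularity for $(-\Delta)^s$ applied to the equation there (the nonlocal tail is controlled by (i)) bounds the right-hand side by a constant. Writing $p=2^*_s-1$, we integrate by parts in $F(t)=\int_0^t\tau^pL(\tau)d\tau$ to get
\[
NF(t)-\tfrac{N-2s}{2}tf(t)=-\tfrac{N}{p+1}\int_0^t\tau^{p+1}L'(\tau)d\tau+O(1),
\]
and by \ref{f3} and Karamata's theorem ($|L'|\in RV_{-1}$) this equals $\tfrac{N}{(p+1)^2}\,t^{p+2}|L'(t)|(1+o(1))$ for large $t$. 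Therefore the Pohozaev identity gives the key integral bound
\[
\int_\Omega u_k^{2^*_s}\,u_k|L'(u_k)|\,dx\le C .
\]

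The third step, which is the heart of the argument and the main obstacle, is a lower bound on the concentration. We want to show that $u_k\ge M_k/2$ on a ball $B(x_k,R_k)$ with $R_k\gtrsim\big(M_k^{p-1}L(M_k)\big)^{-1/(2s)}$. Locally this is the natural blow-up scale: rescale $v_k(y)=M_k^{-1}u_k(x_k+R_ky)$, so that $(-\Delta)^sv_k=v_k^{p}L(M_kv_k)/L(M_k)$ with right-hand side bounded by $C$ (slow variation together with Potter bounds). Uniform $C^{\alpha}$ estimates for the fractional Laplacian then hold, but they need control of the nonlocal tail, $\int v_k(y)(1+|y|)^{-N-2s}dy$. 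This tail is where (i) and the boundary bound enter, and handling it carefully is the delicate part. A Hölder modulus then gives $v_k\ge1/2$ on a fixed ball $B(0,r_0)$. Plugging this ball into the Pohozaev bound gives
\[
C\ge M_k^{2^*_s+1}|L'(M_k)|\,R_k^N\approx \frac{M_k|L'(M_k)|}{L(M_k)^{N/(2s)}},
\]
because $M_k^{2^*_s}\cdot M_k^{-(p-1)N/(2s)}=M_k^{2^*_s-2^*_s}=1$, using $(p-1)N/(2s)=2N/(N-2s)$. The right-hand side tends to $+\infty$ by \eqref{hypothesis1:intro}, which is the desired contradiction. If the tail control in this step fails, we would instead compare $u_k$ from below with the Green function of a ball $B(x_k,\rho)\subset\Omega$ and use the superharmonicity $(-\Delta)^su_k\ge0$ to obtain the needed radius directly.
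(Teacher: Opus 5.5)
Your argument is correct, and your first two steps coincide with the paper's. The paper cites Ros-Oton--Serra's moving-plane boundary estimate (Proposition~\ref{lem:1}, whose bound \eqref{bdd:5} is exactly the localized control of $u/\delta^s$ you sketch). It then obtains the same integral bound $\int_\Omega u^{2^*_s+1}|L'(u)|\,dx\le C$ from Pohozaev and Karamata.

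The two routes genuinely differ in the concentration step. The paper never rescales. Writing $M_n=\|u_n\|_\infty$ and $p=2^*_s-1$ as in your notation, it uses the integral bound a second time to get $\|f(u_n)\|_{L^q(\Omega)}^q\le C\,h(M_n)$. Here $h(t)=t^{pq-2^*_s-1}L(t)^q|L'(t)|^{-1}$ and $q\in(\frac{N}{2s},\frac{N}{s})\cap[2,\infty)$. It then chains interior $W^{2s,q}$ regularity, the embedding $W^{2s,q}\hookrightarrow W^{s,q^*_s}$ and Morrey to get $|u_n(x)-u_n(y)|\le C\,h(M_n)^{1/q}|x-y|^{2s-N/q}$ in $\Omega_d$. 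The resulting radius bound, fed back into the integral bound, gives $X_n^{2sq/(2sq-N)}\le C$ with $X_n=M_n|L'(M_n)|/L(M_n)^{N/(2s)}$.

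You instead work at the natural blow-up scale $R_k=(M_k^{p-1}L(M_k))^{-1/(2s)}$. There Potter bounds make the rescaled right-hand side uniformly bounded, so only an $L^\infty\to C^\alpha$ interior estimate is needed. This is more elementary: it needs no Calder\'on--Zygmund $W^{2s,q}$ theory and no restriction $q\ge 2$, and it uses the integral bound only once. It also produces the sharp radius and gives $X_k\le C$ directly.

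The step you single out as delicate is in fact immediate. Since $0\le u_k\le M_k$ on all of $\mathbb{R}^N$, you have $0\le v_k\le 1$ on $\mathbb{R}^N$. The tail $\int v_k(1+|y|)^{-N-2s}\,dy$ is therefore bounded by a dimensional constant, and $\|v\|_{C^\alpha(B_{1/2})}\le C\big(\|v\|_{L^\infty(\mathbb{R}^N)}+\|(-\Delta)^s v\|_{L^\infty(B_1)}\big)$ applies at once. The boundary estimate is needed there only to guarantee $B(x_k,R_k)\subset\Omega$, which follows from $\mathrm{dist}(x_k,\partial\Omega)\ge\rho$ and $R_k\to0$. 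The Green-function fallback is unnecessary.
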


Note that the condition~\eqref{hypothesis1:intro}  is equivalent to
$$
\frac{t^{\frac{N}{2s}+1}\,\big|tf'(t)-qf(t)\big|}{f(t)^{\frac{N}{2s}}}  \to +\infty  \qq{as} t\to\infty,
$$
with $q=\frac{N+2s}{N-2s}$. Indeed, since $f(t) = t^{q}\,L(t)$, we obtain by differentiating
$
f'(t)=qt^{q-1}\,  L(t) +t^{q}\,L'(t).
$
Hence, it follows
\begin{align*}
\frac{t^{\frac{N}{2s}+1}\, \big(tf'(t)-qf(t)\big)}{f(t)^{\frac{N}{2s}}}
=\frac{t^{\frac{N}{2s}+1+q}\,    \big(t\,L'(t)\big) }{t^{q\frac{N}{2s}}\, L(t)^{\frac{N}{2s}}} 
= \frac{t\,L'(t)}{L(t)^{\frac{N}{2s}}}.
\end{align*}

Let us also notice that due to the monotonicity in the fractional parameter $s$, hypothesis~\eqref{hypothesis1:intro} is always satisfied in the non local framework whenever it is satisfied in the local one ($s=1$).

The proof of the theorem proceeds in three main steps. First, we combine the uniform boundary estimates for positive solutions of \eqref{eq:ell:pb} established in \cite{RosOton_Serra_extremal} (see also Theorem~\ref{th:bdd}) with the fractional Pohozaev identity \cite{RosOton_Serra_poh} (cf. Theorem~\ref{Pohozaev}). This allows us to prove that the integral quantity
\begin{equation} \label{H1:bd}
    \int_\Omega u^{2^*_s+1}|L'(u) |\,dx \sim \int_\Omega \big( 2^*_s \, F(u)-u\,f(u) \big)\,dx
\end{equation}
is uniformly bounded. Second, by exploiting fractional Morrey embeddings, elliptic regularity theory, and the asymptotic properties of slowly varying functions, we obtain a lower bound for the radius of a ball in which the solution exceeds half of its $L^\infty(\Om)$ norm (see \cite{Castro_Pardo_RMC_2015, Cuesta_Pardo_MedJM} for the analogous local technique). Finally, using the integral bound~\eqref{H1:bd} and localizing it within this ball, we deduce that
\begin{equation*}
\frac{\|u\|_\infty \, \big|L'\big(\|u\|_\infty\big)\big|}{L^\frac{N}{2s}\big(\|u\|_\infty\big)} \le C < +\infty, 
\end{equation*}
which, in conjunction with hypothesis~\eqref{hypothesis1:intro}, yields the uniform boundedness of the solutions.

In \cite[Table 1]{Cuesta_Pardo_MedJM} can be found a list of examples of slowly varying functions for the case $q=2^*-1$ satisfying our hypothesis \ref{f1}-\ref{f3}.  It can be easily checked that in cases 1 and 2 concerning 
$$
L_1(s):=\left[\log (K + s)\right]^\al , 
$$
and
$$
L_2(s):=\Big[ \log (K+s)/\log\big(K +\log (K+s)\big)\Big]^\al,
$$
both for $\al< 0,\ K > 1 ,$
hypothesis~\eqref{hypothesis1:intro}  is satisfied whenever the parameter $|\al|>\frac{2s}{N-2s}.$\\
\medskip

For the sake of completeness, we also establish the uniform boundedness of solutions in the strictly subcritical regime. In this setting, thanks to the gap to the critical fractional Sobolev exponent, one obtains a uniform $H^s(\mathbb{R}^N)$ estimate without imposing any additional structural or monotonicity assumptions.

\begin{thm}\label{th:aprioriSubcritical:intro}
    Given a bounded $C^{1,1}$ convex domain $\Omega \subset \mathbb{R}^N$, let $s \in (0,1)$ and $N > 2s$.  Assume that $q \in \big[1,\frac{N+2s}{N-2s}\big)$ and the nonlinearity $f(t) = t^{q} L(t)$ satisfies hypotheses {\rm \ref{f1}--\ref{f2}} with $L\in RV_0$.
    
    Then, there exists a constant $C > 0$, depending only on $N, s, \Omega$, and~$f$, such that any positive weak solution $u \in H_0^s(\Omega)$ to \eqref{eq:ell:pb} satisfies
    \begin{equation*}
        \|u\|_{L^\infty(\Omega)} \le C.
    \end{equation*}
\end{thm}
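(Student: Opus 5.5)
We argue in the spirit of de Figueiredo, Lions and Nussbaum, and use the strict gap to the critical exponent to bypass any structural hypothesis on $L$.

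\textbf{Step 1: $L^1$-type bound and boundary estimate.} Test the equation against the first eigenfunction $\varphi_{1,s}>0$. By \ref{f2}, $f(t)\ge (\la_{1,s}+\varepsilon)t - C$ for all $t\ge0$. This gives
$$\la_{1,s}\int_\Om u\varphi_{1,s}=\int_\Om f(u)\varphi_{1,s}\ge(\la_{1,s}+\varepsilon)\int_\Om u\varphi_{1,s}-C,$$
so both $\int_\Om u\varphi_{1,s}$ and $\int_\Om f(u)\varphi_{1,s}$ are uniformly bounded. Convexity of $\Om$ then allows the moving planes argument, which is the content of the uniform boundary estimate of Theorem~\ref{th:bdd} (after \cite{RosOton_Serra_extremal}). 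It yields a neighbourhood $\Om_\delta$ of $\p\Om$ and a constant $C$ such that $\|u\|_{L^\infty(\Om_\delta)}\le C$ and $\int_\Om f(u)\le C$.

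\textbf{Step 2: uniform $H^s_0$ bound via Pohozaev.} Apply the fractional Pohozaev identity, Theorem~\ref{Pohozaev}:
$$\frac{N-2s}{2}\int_\Om uf(u)-N\int_\Om F(u)=-\frac{\Gamma(1+s)^2}{2}\int_{\p\Om}\Big(\frac{u}{\de^s}\Big)^2(x\cdot\nu).$$
Step 1 and boundary regularity (Ros-Oton--Serra) bound the right-hand side uniformly, since $u$ is uniformly bounded in $\Om_\delta$.

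We use the gap $q<2^*_s-1$ as follows. Since $L\in RV_0$, Karamata's theorem gives $F(t)\sim \frac{tf(t)}{q+1}$. Because $\frac{N}{q+1}>\frac{N-2s}{2}$, choose $\theta\in(q+1,2^*_s)$; then $\theta F(t)-tf(t)\ge c\,tf(t)-C$ with $c>0$. Rewriting the Pohozaev identity as
$$\Big(\tfrac{N}{\theta}\cdot\theta F - \tfrac{N-2s}{2}uf\Big)$$
gives $\int_\Om uf(u)\le C$. (Here we handle $\int uf \le \frac{2N}{N-2s}\int F + C$ together with $F\le \frac{1}{q+1}(1+\varepsilon)tf + C$.) Hence
$$[u]^2_{H^s}=\int_\Om uf(u)\le C.$$

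\textbf{Step 3: bootstrap to $L^\infty$.} Potter's bounds give $f(t)\le C(1+t^{q+\varepsilon})$ with $q+\varepsilon<2^*_s-1$. So $f(u)\le a(x)(1+u)$, where $a=C(1+u)^{q+\varepsilon-1}$ is uniformly bounded in $L^{r}$ for some $r>\frac{N}{2s}$ by the uniform $L^{2^*_s}$ bound. Alternatively, iterate the fractional Brezis--Kato/Moser scheme (as in \cite[Prop.~2.2]{Barrios_Colorado_Servadei_Soria}); the subcritical gap makes every constant depend only on $\|u\|_{H^s}$, not on $u$ itself. Each iteration improves integrability by a fixed factor. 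After finitely many steps $f(u)\in L^p$ with $p>\frac{N}{2s}$ uniformly, and the $L^p\to L^\infty$ estimate for $(-\Delta)^s$ concludes.

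\textbf{Main obstacle.} The delicate point is Step 1: the boundary estimate must hold uniformly for merely $RV$-type $f$, which need not be monotone or Lipschitz at small $t$. We would rely on Theorem~\ref{th:bdd} as stated, checking only that its hypotheses (local Lipschitz continuity of $f$) are met. A smaller, routine point is the Step 2 inequality: the bound must be made uniform despite the $\varepsilon$-losses in Karamata's asymptotics, and these losses are absorbed by the strict gap $\theta<2^*_s$.
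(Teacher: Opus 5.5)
Your proposal is correct and follows essentially the same route as the paper: the boundary estimate of Theorem~\ref{th:bdd} controls the Pohozaev boundary term. Karamata's asymptotics $F(t)\sim tf(t)/(q+1)$, together with the gap $q+1<2^*_s$, then turn the Pohozaev identity into a uniform bound on $\int_\Omega uf(u)$, hence on $[u]_{H^s}$, and a subcritical bootstrap gives the $L^\infty$ bound.

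The only difference is in the bootstrap. You bootstrap globally, using Potter bounds and a Green-function or Brezis--Kato iteration. The paper instead bootstraps in the interior via Theorem~\ref{th:local:reg} and Morrey embeddings, then invokes Theorem~\ref{th:bdd} near the boundary.
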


This paper is organized as follows. In Section~\ref{sec:prel} we include some preliminaries on linear and nonlinear fractional elliptic equations, as well as functions of regular variation. Section~\ref{tec:4} contains the proof of our main results, Theorems~\ref{th:apriori:intro} and \ref{th:aprioriSubcritical:intro}.

\section{Preliminaries}
\label{sec:prel}

\subsection{Fractional Sobolev inequalities}

$$ \\ $$
\vspace{-13mm}

The analysis of fractional elliptic equations requires a precise understanding of the spaces where the solutions lie. In particular, the fractional Sobolev inequalities are one of the main ingredients to establish uniform a priori estimates, as they provide the necessary bridge between the energy functional and the integrability properties of the solutions.

Throughout this work, given a domain $\Omega \subset \mathbb{R}^N$, $s \in (0,1)$ and $q \in [1, \infty)$, we define the fractional Sobolev space $W^{s,q}(\Omega)$ as:
$$ 
W^{s,q}(\Om):=\left\{ u\in L^{q}(\Om)\ :\
\frac{|u(x)-u(y)|}{|x-y|^{N/q+s}}
\in L^{q}(\Om\times\Om)\right\},
$$
which is a Banach space under the norm
$$
\|u\|_{W^{s,q}(\Om)}:=\left(\|u\|_{L^{q}(\Om)}
+\int_\Om\int_\Om
\frac{|u (x)-u (y)|^q}{|x-y|^{N+sq}}\
dx dy
\right)^{1/q}.
$$

For $s \geq 1$ not being an integer, let $s = k + \sigma$ where $k = \lfloor s \rfloor$ is its integer part and $\sigma = \{s\} \in (0,1)$ is its fractional part. The fractional Sobolev space $W^{s,q}(\Omega)$ is defined as
$$ W^{s,q}(\Omega) := \left\{ u \in W^{k,q}(\Omega) : D^\alpha u \in W^{\sigma,q}(\Omega) \text{ for all } |\alpha| = k \right\},$$
endowed with the norm
$$ \|u\|_{W^{s,q}(\Omega)} := \|u\|_{W^{k-1,q}(\Omega)} + \sum_{|\alpha|=k} \|D^\alpha u\|_{W^{\sigma,q}(\Omega)}.$$

Analogously to the classical case ($s=1$), these spaces satisfy embedding properties where functions gain integrability by balancing the fractional order of differentiation. The following result summarizes the main embeddings:

\begin{thm}[Theorems 5.4, 6.7, 6.10, and 8.2 in\cite{Nezza_Palatucci_Valdinoci}]
	\label{th:fr:Sobolev:emb}
Let $\Om\subset \R^N$ be an open Lipschitz domain with bounded boundary. Then,
\begin{enumerate}
	\item[(i)] Subcritical case: if $sq < N$, there exists $C=C(N,q,s,\Omega) > 0$ such that for any $u \in W^{s,q}(\Omega)$, it satisfies
	$$ \|u\|_{L^{p}(\Omega)} \le C \|u\|_{W^{s,q}(\Omega)}, \quad \text{for any } p \in [q, q^*_s], $$
	where 
	$$q^*_s := \frac{Nq}{N-sq}$$
	is the fractional critical exponent. Moreover, in the case $p=q^*_s$, the constant $C$ is independent of the domain $\Omega$.
	
	\item[(ii)] Critical case: if $sq=N$, there exists $C=C(N,q,s,\Om)>0$ such that, for any $u \in W^{s,q}(\Om)$, it follows
	$$ \|u\|_{L^{p}(\Om)}\le C  \|u\|_{W^{s,q}(\Om)}, \qq{for any} p\in [q,\infty).$$
	
	\item[(iii)] Supercritical case: if $sq > N$, functions in $W^{s,q}(\Omega)$ are continuous. Furthermore,
	\begin{equation*}
		\|u\|_{C^{0,\alpha}(\Omega)} \le C \|u\|_{W^{s,q}(\Omega)}, 
	\end{equation*}
	with $\alpha = s - N/q$ and $C=C(N,q,s,\Om)>0$.
\end{enumerate}
\end{thm}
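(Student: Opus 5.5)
This statement collects embedding results quoted from \cite{Nezza_Palatucci_Valdinoci}, so the plan is to recover each item from the whole-space inequalities combined with an extension operator, rather than to prove anything from scratch. The first step is the extension: since $\Om$ is Lipschitz with bounded boundary, there is a bounded linear operator $E:W^{s,q}(\Om)\to W^{s,q}(\R^N)$ with $Eu=u$ in $\Om$ and $\|Eu\|_{W^{s,q}(\R^N)}\le C(N,q,s,\Om)\|u\|_{W^{s,q}(\Om)}$. I would build it by covering $\partial\Om$ with finitely many Lipschitz graph charts, multiplying by a subordinate partition of unity, and reflecting across each graph. The ingredient that makes this work is that $W^{s,q}$ is stable under multiplication by Lipschitz cut-offs and under bi-Lipschitz changes of variables. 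Each item on $\Om$ then follows from the corresponding inequality on $\R^N$ applied to $Eu$ and restricted back to $\Om$.

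\emph{(i)} For $u\in C_c^\infty(\R^N)$ with $sq<N$, I aim for the Gagliardo--Nirenberg-type bound $\|u\|_{L^{q^*_s}(\R^N)}^q\le C(N,q,s)\,[u]_{W^{s,q}(\R^N)}^q$. The constant here is dilation invariant and does not depend on any domain, which is the source of the domain-independence claimed for $p=q^*_s$. I plan the argument of \cite{Nezza_Palatucci_Valdinoci}: first treat bounded functions, decompose $u$ into dyadic level sets $A_k=\{|u|>2^k\}$, and bound $\sum_k 2^{kq}|A_{k}|^{(N-sq)/N}$ by the Gagliardo seminorm using the elementary estimate
\[
\int_{E^c}\frac{dy}{|x-y|^{N+sq}}\ge c\,|E|^{-sq/N},
\]
which holds for every $x\in E$. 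The general case then follows by truncation and density. For $u$ supported in $\Om$ no extension is needed, since $u\equiv0$ outside $\Om$ and the seminorm over $\R^{2N}$ controls the $L^{q^*_s}$ norm; this is the form the paper later uses for $H_0^s(\Om)$. For the intermediate exponents $p\in[q,q^*_s]$, I would interpolate between $L^q$ and $L^{q^*_s}$ via Hölder's inequality.

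\emph{(ii)} When $sq=N$, the space $W^{s,q}$ continuously embeds into $W^{s',q}$ for every $s'<s$, because the singular part of the kernel only improves and the tail is controlled by the $L^q$ norm. Applying (i) with $s'q<N$ gives $L^{Nq/(N-s'q)}$, and this exponent tends to $\infty$ as $s'\uparrow s$, so every $p\in[q,\infty)$ is reached.

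\emph{(iii)} When $sq>N$, I would use a Campanato--Morrey argument. For a ball $B_r$, Jensen's inequality gives
\[
\frac{1}{|B_r|}\int_{B_r}|u-u_{B_r}|^q\le \frac{C}{r^N}\int_{B_r}\int_{B_r}\frac{|u(x)-u(y)|^q}{|x-y|^{N+sq}}\,r^{sq}\,dx\,dy .
\]
Hence the mean oscillation on $B_r$ is bounded by $C\,r^{s-N/q}\,[u]_{W^{s,q}}$. Campanato's characterization then yields $C^{0,\alpha}$ regularity with $\alpha=s-N/q$, and the sup norm is controlled by adding $\|u\|_{L^q}$.

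I expect the main obstacle to be the extension operator for $\Om$ that is merely Lipschitz, specifically checking that the nonlocal seminorm is preserved across the reflection when $x$ and $y$ lie in different charts. I would handle these cross terms by splitting the double integral into near-diagonal and off-diagonal parts: the off-diagonal part is bounded by the $L^q$ norm, and the near-diagonal part by the bi-Lipschitz distortion of the reflection.
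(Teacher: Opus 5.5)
Your route is the one behind the cited results of \cite{Nezza_Palatucci_Valdinoci}: extension for Lipschitz domains with bounded boundary (Theorem 5.4), the level-set proof of the fractional Sobolev inequality (Theorems 6.5 and 6.7), the embedding $W^{s,q}\hookrightarrow W^{s',q}$ for $s'<s$ when $sq=N$ (Theorem 6.10), and a Campanato argument for $sq>N$ (Theorem 8.2). It proves the three embeddings in (i)--(iii) with constants depending on $\Omega$. The obstacle you anticipate is milder than you expect. Extend each localized piece $\psi_j u$ separately and sum: then cross terms between different charts never occur. Across a single graph they are controlled because the reflection $R$ satisfies $|x-Ry|\le C|x-y|$ when $x,y$ lie on opposite sides.

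The genuine problem is the ``Moreover'' clause in (i). Your plan claims to recover it but does not, and in fact it cannot be proved. Composing the domain-free inequality on $\R^N$ with $E$ brings in $\|E\|$, which depends on $\Omega$. Dilation invariance of $C(N,q,s)$ does not transfer to the inhomogeneous norm $\|u\|_{W^{s,q}(\Omega)}$ either: under $u\mapsto u(\cdot/\lambda)$ its $L^q$ term scales like $\lambda^{N/q}$, while $\|u\|_{L^{q^*_s}}$ and the seminorm scale like $\lambda^{N/q-s}$. Concretely, take $\Omega=B_\varepsilon$ and $u\equiv 1$. The seminorm over $B_\varepsilon\times B_\varepsilon$ vanishes, and
\[
\frac{\|u\|_{L^{q^*_s}(B_\varepsilon)}}{\|u\|_{W^{s,q}(B_\varepsilon)}}=|B_\varepsilon|^{\frac{1}{q^*_s}-\frac{1}{q}}=|B_\varepsilon|^{-s/N}\longrightarrow\infty \quad\text{as } \varepsilon\to 0 .
\]
Extending this $u$ by zero shows that restricting to functions vanishing outside $\Omega$ does not help, as long as the seminorm is taken over $\Omega\times\Omega$. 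What is true, and what your step (i) actually proves, is
\[
\|u\|_{L^{q^*_s}(\Omega)}\le C(N,q,s)\,[u]_{W^{s,q}(\R^N)} \quad\text{for } u\equiv 0 \text{ outside } \Omega .
\]
Here the seminorm is taken over $\R^{2N}$, so it includes the interaction of $\Omega$ with $\R^N\setminus\Omega$; this is the form relevant for $H^s_0(\Omega)$. State explicitly that this replaces the clause as written, rather than presenting it as a proof of that clause.
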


While the previous theorem focuses on integrability, it is often necessary to track how functions move between different fractional scales. The following result, concerning nested embeddings, will be fundamental for our bootstrap iterations.

\begin{thm}[Theorem A.2. in \cite{Zuazua1}] \label{th:fr:Sobolev:emb2}
	Given $\Om\subset \R^N$ a bounded Lipschitz domain, let $0<s\leq r$ and $1<q\leq p < \infty$ be real numbers such that
	$$ r-\frac{N}{q} \geq s-\frac{N}{p}.  $$
	Then, for any $u \in W^{r,q}(\Om)$, we have
	\begin{equation*}
		\|u\|_{W^{s,p}(\Om)} \le C \, \|u\|_{W^{r,q}(\Om)},
	\end{equation*}
	where $C>0$ is a constant depending only on $N$, $s$, $r$, $p$, $q$ and $\Omega$.
\end{thm}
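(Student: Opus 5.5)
The plan is to reduce to the whole space and then use Littlewood--Paley (Besov/Triebel--Lizorkin) characterizations of fractional Sobolev spaces, where the embedding becomes a Bernstein inequality on dyadic blocks.

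\textbf{Step 1: extension.} Since $\Om$ is a bounded Lipschitz domain, there is a universal (Stein/Rychkov-type) extension operator $E$ that is bounded $W^{r,q}(\Om)\to W^{r,q}(\R^N)$ for all $r>0$ and $1<q<\infty$. Restriction $W^{s,p}(\R^N)\to W^{s,p}(\Om)$ is trivially bounded. It therefore suffices to prove
$$\|v\|_{W^{s,p}(\R^N)}\le C\|v\|_{W^{r,q}(\R^N)}.$$

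\textbf{Step 2: function-space identifications on $\R^N$.} Here I use the classical identifications
\begin{itemize}
\item $W^{\sigma,p}(\R^N)=B^{\sigma}_{p,p}(\R^N)$ for non-integer $\sigma$;
\item $W^{k,p}(\R^N)=F^{k}_{p,2}(\R^N)$ for integer $k$ and $1<p<\infty$.
\end{itemize}
Take a Littlewood--Paley decomposition $v=\sum_j\Delta_j v$. Bernstein's inequality gives
$$\|\Delta_j v\|_{L^p}\le C\,2^{jN(\frac1q-\frac1p)}\|\Delta_j v\|_{L^q}.$$
Multiplying by $2^{js}$ and using $s+\frac Nq-\frac Np\le r$ yields
$$2^{js}\|\Delta_j v\|_{L^p}\le C\,2^{jr}\|\Delta_j v\|_{L^q}.$$
Taking $\ell^q$ norms, and using $\ell^q\subset\ell^p$ since $q\le p$, gives $B^r_{q,q}\hookrightarrow B^s_{p,p}$. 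This settles the case where $r$ and $s$ are both non-integers. The low-frequency block $S_0v$ is handled the same way, since $\|S_0v\|_{L^p}\le C\|S_0 v\|_{L^q}$.

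\textbf{Step 3: integer exponents.} When $r$ and/or $s$ is an integer, I would use the elementary inclusions
$$F^{\sigma}_{q,2}\hookrightarrow B^{\sigma}_{q,\max(q,2)},\qquad B^{\sigma}_{p,\min(p,2)}\hookrightarrow F^{\sigma}_{p,2},$$
both of which follow from Minkowski's inequality. If the inequality $r-\frac Nq>s-\frac Np$ is strict, any discrepancy in the third Besov index is absorbed: sacrifice an $\varepsilon$ of smoothness and sum a geometric series. If $q=p$, the hypothesis reduces to $r\ge s$. Then the embedding $W^{r,p}\hookrightarrow W^{s,p}$ with $s\le r$ is standard: interpolate between integer orders, or use the $\varepsilon$-loss trick when $r>s$; the case $r=s$ is trivial.

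\textbf{Main obstacle: the sharp equality case.} The hard case is $r-\frac Nq=s-\frac Np$ with $q<p$ and an integer endpoint, where no $\varepsilon$ can be lost. Here I would invoke the Jawerth--Franke embeddings,
$$F^{r}_{q,\infty}\hookrightarrow B^{s}_{p,q}\qquad\text{and}\qquad B^{r}_{q,p}\hookrightarrow F^{s}_{p,\infty},$$
valid under exactly this equality with $q<p$. Combined with $F^r_{q,2}\hookrightarrow F^r_{q,\infty}$ and $B^s_{p,q}\hookrightarrow B^s_{p,p}$ (as $q\le p$), the first gives the case where $r$ is an integer and $s$ is a non-integer. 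The second, with $p$ in place of the third index, handles the case where $s$ is an integer, via a Triebel--Lizorkin version $F^{r}_{q,\cdot}\hookrightarrow F^{s}_{p,2}$. I expect this third-index bookkeeping to be the only delicate point, everything else being routine.
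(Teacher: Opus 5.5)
The paper gives no proof of this statement. It is imported from \cite{Zuazua1} and used as a black box, so your outline is a genuinely different and more informative route, and it is correct in substance. What your route buys is visibility of where the real content lies. The sharp case $r-\frac Nq=s-\frac Np$ with $q<p$ is exactly the one the paper uses: there $r=2s$ and $p=q^*_s$, so $s-\frac N{q^*_s}=2s-\frac Nq$. Moreover, for $s=\frac12$ the source exponent $2s=1$ is an integer, so the endpoint analysis is genuinely needed.

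There are three things to tighten.

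\begin{enumerate}
\item As written, your Franke embedding $B^{r}_{q,p}\hookrightarrow F^{s}_{p,\infty}$ cannot reach $W^{s,p}=F^{s}_{p,2}$, because $F^{s}_{p,\infty}$ is the largest space with those $s,p$. You need Franke's theorem with an arbitrary fine index on the target.

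\item The whole case split can be collapsed. Because $B^{\sigma}_{p,p}=F^{\sigma}_{p,p}$, every space in the statement is of Triebel--Lizorkin type. The $F$-scale Sobolev embedding $F^{r}_{q,u}\hookrightarrow F^{s}_{p,v}$ holds for $q<p$, $r-\frac Nq=s-\frac Np$, and all fine indices $u,v$. Together with $F^{r'}_{p,u}\hookrightarrow F^{s}_{p,v}$ for $r'>s$ (and the identity when $q=p$, $r=s$), it handles every case at once, with no separate Jawerth/Franke bookkeeping.

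\item In Step~1 the real input is not just an extension operator on restriction spaces. You need the equivalence, on a Lipschitz domain, of the intrinsic norm used here with the quotient norm inherited from $\R^N$. The intrinsic norm is the Gagliardo seminorm on $\Om\times\Om$, applied to $D^\alpha u$ when $r>1$. This equivalence is known, but it should be cited explicitly, since it is where the Lipschitz hypothesis is actually used.
\end{enumerate}
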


\subsection{Regularity for the linear fractional Dirichlet problem}

$$ \\ $$
\vspace{-13mm}

In this subsection, we recall the regularity theory for the fractional linear problem, which differs significantly from the classical case near the boundary. Let us consider
\begin{equation}\label{eq:linear}
	\begin{cases}
		(-\Delta)^s \,u = g, &\qquad \text{in }\, \Omega,  \\
		\phantom{(-\Delta)^s} \, u = 0, &\qquad  \text{in }\, \mathbb{R}^N \setminus \Omega.
	\end{cases}
\end{equation}

The natural energy space for this problem is $H^s_0(\Omega)$, which consists of functions in $H^s(\mathbb{R}^N):=W^{s,2}(\R^n)$ that vanish outside $\Omega$. Then, it follows by standard arguments that if $g \in (H^s_0(\Omega))^*$, there exists a unique weak solution $u \in H^{s}_0(\Omega)$.

When the right-hand side $g$ possesses higher integrability, the solution gains differentiability in the interior of the domain, following the fractional analogue of the Calderón-Zygmund estimates.

\begin{thm}[Theorem 3. in \cite{Zuazua2}] \label{th:local:reg}
Given $\Omega \subset \R^N$ a bounded open set, let $g\in \left(H^s_0(\Omega)\right)^* \cap L^{q}(\Om)$ with $q\in [2,\infty)$ and let $u \in H^{s}_0(\Om)$ be the unique weak solution to the Dirichlet problem \eqref{eq:linear}. Then, for any $\widetilde{\Omega} \subset \subset \Omega$ there exists a positive constant $C=C(N,q,s,\Omega,\widetilde{\Omega})$ such that the solution of the problem satisfies
$$\|u\|_{W^{2s,q}(\widetilde{\Omega})}\le C \, \|g\|_{L^{q}(\Om)}.  $$
\end{thm}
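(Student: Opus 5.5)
The plan is a localization argument: reduce to a problem on all of $\R^N$, where the fractional Laplacian is a Fourier multiplier, and treat the error created by cutting off as a lower-order term.

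\emph{Localization.} Fix nested open sets $\widetilde\Omega\subset\subset\Omega_1\subset\subset\Omega_2\subset\subset\cdots\subset\subset\Omega_m\subset\subset\Omega$, with $m$ to be chosen depending on $N,s,q$. Choose cut-off functions $\eta_j\in C_c^\infty(\Omega_{j})$ with $\eta_j\equiv1$ on $\Omega_{j-1}$. A direct computation with the singular-integral definition gives
$$
(-\Delta)^s(\eta_j u)=\eta_j\, g+\mathcal{C}_j u \qquad\text{in }\R^N,
$$
where $\mathcal{C}_j u(x)=C_{N,s}\,\mathrm{P.V.}\!\int_{\R^N}\frac{(\eta_j(x)-\eta_j(y))\,u(y)}{|x-y|^{N+2s}}\,dy$ is the commutator $[(-\Delta)^s,\eta_j]u$.

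\emph{Whole-space estimate.} For a compactly supported $w$ with $(-\Delta)^s w=h\in L^q(\R^N)$, I rewrite
$$
(1-\Delta)^s w=h+\big((1-\Delta)^s-(-\Delta)^s\big)w .
$$
The multiplier $(1+|\xi|^2)^s-|\xi|^{2s}$ has order at most $\max\{2s-2,0\}$ at infinity, so the second term is controlled by $\|w\|_{L^q}$, or by a low-order norm of $w$. Mikhlin's multiplier theorem then gives $w$ in the Bessel potential space $H^{2s,q}(\R^N)$. Since $q\ge2$, there is the embedding $H^{2s,q}\hookrightarrow B^{2s}_{q,q}=W^{2s,q}$ (for $2s\notin\mathbb N$; the case $s=1/2$ is handled directly). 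This is exactly where the restriction $q\in[2,\infty)$ enters. The result is $\|w\|_{W^{2s,q}}\le C(\|h\|_{L^q}+\|w\|_{L^q})$.

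\emph{Commutator and bootstrap.} I split $\mathcal{C}_j u$ into a near-diagonal part and a far part.
\begin{itemize}
\item Near the diagonal, $|\eta_j(x)-\eta_j(y)|\le C|x-y|$, so the kernel behaves like $|x-y|^{-N-2s+1}$. This is an operator of order $2s-1$, which is a smoothing (Riesz-potential) operator when $s<1/2$. When $s\ge1/2$, I also use the first-order Taylor expansion of $\eta_j$ together with the symmetry of the kernel.
\item Away from the diagonal, with $x\in\operatorname{supp}\eta_j$ and $y\notin\Omega_{j+1}$, the kernel is bounded. This part is controlled by $\|u\|_{L^1(\Omega)}$ and is in fact smooth in $x$ on $\Omega_{j-1}$.
\end{itemize}
Hence $\|\mathcal{C}_j u\|_{L^{p}}\le C\|u\|_{W^{\sigma,p'}(\Omega_{j+1})}$ for some $\sigma<2s$ and suitable exponents. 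The induction starts from the energy estimate
$$
\|u\|_{H^s}\le C\|g\|_{(H^s_0)^*}\le C\|g\|_{L^2(\Omega)}\le C\|g\|_{L^q(\Omega)},
$$
which uses the boundedness of $\Omega$. Alternating the whole-space estimate with the embeddings of Theorems~\ref{th:fr:Sobolev:emb} and~\ref{th:fr:Sobolev:emb2} raises the integrability and regularity of $u$ by a fixed amount at each step, shrinking the domain from $\Omega_{j+1}$ to $\Omega_{j}$ each time. After finitely many steps, $m$ of them, the commutator lies in $L^q$ with norm $\le C\|g\|_{L^q}$. Applying the whole-space estimate once more on $\widetilde\Omega$ then gives $\|u\|_{W^{2s,q}(\widetilde\Omega)}\le C\|g\|_{L^q(\Omega)}$.

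\emph{Main obstacle.} I expect the hardest step to be the quantitative commutator bound. It must gain a fixed positive amount of regularity at each iteration uniformly, and for $s\ge1/2$ the near-diagonal kernel is not integrable without the cancellation coming from the symmetric expansion of $\eta_j$. My first approach would be to view $\mathcal{C}_j$ as a pseudodifferential operator of order $2s-1$ with symbol $\sum_\alpha \partial_\xi^\alpha|\xi|^{2s}\,D^\alpha\eta_j/\alpha!$ plus a smoothing remainder, and to apply $L^p$-boundedness of such operators. If that becomes too technical, I would fall back on the explicit kernel splitting described above.
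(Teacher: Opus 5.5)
Your proposal is correct and is essentially the argument of the source the paper cites for this result (Biccari--Warma--Zuazua; the paper itself gives no proof). That argument cuts off, writes the equation for $\eta u$ on $\mathbb{R}^N$ with the commutator (equivalently, $u\,(-\Delta)^s\eta$ minus the bilinear Gagliardo-type term in $u$ and $\eta$) treated as an operator of order $2s-1$, and uses the whole-space $L^q$ theory in Bessel potential spaces via a Mikhlin multiplier. It then bootstraps finitely many times from the $H^s$ energy bound and concludes with $H^{2s,q}=F^{2s}_{q,2}\hookrightarrow B^{2s}_{q,q}=W^{2s,q}$, which is exactly where $q\ge 2$ enters. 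The only detail missing from your plan is that the commutator must be bounded in $L^q(\mathbb{R}^N)$, not only on $\operatorname{supp}\eta_j$. This is harmless: at positive distance from the support it equals $-C_{N,s}\int\eta_j(y)u(y)|x-y|^{-N-2s}\,dy$, which is bounded by $C\|u\|_{L^1(\Omega)}$ and decays like $|x|^{-N-2s}$.
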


A crucial observation is that, unlike the classical Laplacian ($s=1$), the solution to \eqref{eq:linear} with $g \in L^\infty(\Omega)$ is not necessarily $W^{2s,q}$ or $C^{1, \alpha}$ up to the boundary. Instead, solutions behave like $\text{dist}(x, \partial \Omega)^s$ near $\partial \Omega$. The following result provides the optimal global regularity and characterizes this boundary behavior.

\begin{thm}[Proposition 1.1 and Theorem 1.2 in \cite{RosOton_Serra_reg}]\label{th:GlobalBoundaryRegularity}
	Let $\Om$ be a  bounded, $C^{1,1}$ domain and  $g \in L^\infty(\Om )$. Let $u$ be the solution of the fractional problem \eqref{eq:linear}. Then, $u \in C^s(\mathbb{R}^N)$ and
	\begin{equation*}\label{eq:reg}
		\|u\| _{C^{s} (\R^N )} \leq C\, \|g \| _ {L^\infty (\Om ) },
	\end{equation*}
	where $C$ depends only on $\Om$ and $s$.

	Furthermore, let $\delta(x) = \text{dist}(x, \partial \Omega)$. Then, the function $u/\delta^s$ can be continuously extended to $\overline{\Omega}$ and satisfies
	\begin{equation*}\label{eq:reg:2}
		\left\|\frac{u}{\de^s}\right\| _{C^{\al}(\overline{\Omega})} \leq C\, \|g \| _ {L^\infty (\Om ) },
	\end{equation*}
	for some $0<\al<\min\{s,1-s\}$. The constants $\al$ and $C$, depend only on $\Om$ and $s$.	
\end{thm}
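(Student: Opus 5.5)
The plan is to prove the two estimates in sequence: first a sup bound together with a barrier estimate $|u|\le C\|g\|_{L^\infty(\Om)}\,\de^s$, then interior estimates rescaled to the distance to the boundary (which give the $C^s(\R^N)$ bound), and finally an oscillation-decay (Krylov-type) iteration for $u/\de^s$ at boundary points. By linearity we may normalize $\|g\|_{L^\infty(\Om)}=1$.

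\emph{Step 1 (sup and boundary bound).} Since $\Om$ is $C^{1,1}$, it satisfies uniform interior and exterior ball conditions with some radius $\rho>0$. For the exterior ball $B_\rho(z)$ touching $\partial\Om$ at $x_0$, I would use an explicit supersolution. One choice is $\phi(x)=(|x-z|^2-\rho^2)_+^s$, suitably truncated far away, which satisfies $(-\Delta)^s\phi\ge c>0$ in $\Om$. Another choice is a combination of the torsion function $(r^2-|x|^2)_+^s$ of a large ball containing $\Om$ with the half-space solution $(x_n)_+^s$, which is $s$-harmonic. Comparison (maximum principle for $(-\Delta)^s$ with exterior data) then gives $|u|\le C\phi$, hence $|u(x)|\le C\de(x)^s$ and $\|u\|_\infty\le C$.

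\emph{Step 2 ($C^s$ bound).} Fix $x\in\Om$ and set $r=\de(x)/2$. Consider $v(y)=u(x+ry)$, which solves $(-\Delta)^s v=r^{2s}g(x+r\,\cdot)$ in $B_1$. Step~1 gives the growth control $|v(y)|\le Cr^s(1+|y|)^s$, and this weighted tail is integrable against the kernel because $s<2s$. Interior H\"older estimates for $(-\Delta)^s$ then yield $[v]_{C^{s}(B_{1/2})}\le Cr^s$, that is, $[u]_{C^s(B_{r/2}(x))}\le C$. Combining these balls with the bound $|u|\le C\de^s$ by the standard dyadic argument gives $\|u\|_{C^s(\R^N)}\le C$.

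\emph{Step 3 (H\"older regularity of $u/\de^s$).} This is the main obstacle. At a boundary point $x_0$, I would show that there exist constants $a_k$ with
$$
\sup_{B_{R_k}(x_0)\cap\Om}\bigl|u-a_k\de^s\bigr|\le C R_k^{s+\al},\qquad R_k=4^{-k}.
$$
The induction step uses a boundary Harnack/Krylov argument: one compares $u-a_k\de^s$ from above and below with multiples of $\de^s$ in smaller balls. This needs two ingredients:
\begin{enumerate}[label=(\alph*)]
\item The error bound $|(-\Delta)^s\de^s|\le C\de^{-s+?}$, or at least boundedness near $\partial\Om$ up to a small correction coming from the $C^{1,1}$ flattening. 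This is where $C^{1,1}$ is used: $\de^s$ agrees with $(x_n)_+^s$ up to quadratic errors.
\item Control of the nonlocal tails through the growth of the inductive hypothesis, which requires $\al<s$ for summability.
\end{enumerate}
The restriction $\al<1-s$ arises from the error in (a). Once the decay holds at every boundary point, it combines with the interior $C^{s+\al}$-type estimates (rescaled as in Step~2, now applied to $u/\de^s$ away from $\partial\Om$) to give $\|u/\de^s\|_{C^\al(\Omb)}\le C$.
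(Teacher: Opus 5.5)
The paper gives no proof of this statement; it is quoted from Ros-Oton and Serra, and your outline follows their route. That route has three parts: a barrier giving $|u|\le C\|g\|_{L^\infty(\Om)}\de^s$; rescaled interior estimates for the global $C^s$ bound (your Step~2 is essentially their proof of Proposition~1.1 and is fine); and a Krylov-type boundary iteration for $u/\de^s$, in which $C^{1,1}$ enters through the boundedness of $(-\Delta)^s\de^s$ near $\p\Om$.

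\textbf{Step 1.} The first barrier fails as stated. $(|x-z|^2-\rho^2)_+^s$ grows like $|x|^{2s}$, so if you cut it off at radius $R\gg\rho$, the far region contributes roughly $-c\log(R/\rho)$ to $(-\Delta)^s\phi$ at points near $\p B_\rho(z)$. This outweighs the bounded positive part, so $(-\Delta)^s\phi\ge c>0$ fails there. Use instead the Kelvin transform of the torsion function, $\psi(x)=\rho^N|x-z|^{-N}(|x-z|^2-\rho^2)_+^s$. It satisfies $(-\Delta)^s\psi=c_{N,s}\,\rho^{N+2s}|x-z|^{-N-2s}$ outside $\overline{B_\rho(z)}$, which is bounded below on $\Om$, and $\psi\le C(|x-z|-\rho)^s$. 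Here $c_{N,s}$ is the constant value of $(-\Delta)^s(1-|x|^2)_+^s$ in $B_1$. Your half-space alternative requires $\Om$ to lie on one side of its tangent hyperplanes, i.e.\ convexity.

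\textbf{Step 3: the genuine gap.} This step carries the whole second half of the statement. You name a boundary Harnack/Krylov argument, but your ingredients (a) and (b) only allow comparisons of $u-a_k\de^s$ with multiples of $\de^s$. Such comparisons reproduce bounds like $|u-a_k\de^s|\le CR_k^{\al}\de^s$ with no gain from $R_k$ to $R_{k+1}$.

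The missing ingredient is the interior (weak) Harnack inequality for $(-\Delta)^s$, which produces the contraction. It is carried to the boundary by a subsolution and a supersolution comparable to $\de^s$; this is where (a) is actually used. The outcome is a boundary Harnack inequality
$$\sup_{D_{R/2}}\frac{w}{\de^s}\le C\Big(\inf_{D_{R/2}}\frac{w}{\de^s}+KR^s\Big),\qquad D_R:=B_R(x_0)\cap\Om,$$
valid for $w\ge0$ in $\Om$, $w=0$ in $\R^N\setminus\Om$, and $|(-\Delta)^s w|\le K$ in $D_R$.

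The iteration should then run on the weighted bounds $m_k\de^s\le u\le M_k\de^s$ in $D_{R_k}$, with $M_k-m_k\le CR_k^\al$. Your $L^\infty$ bound does not serve here: near $\p\Om$ it gives no comparison with multiples of $\de^s$ until a barrier upgrades it. Apply the inequality to $u-m_k\de^s$ and $M_k\de^s-u$. Their sum is $(M_k-m_k)\de^s$, so for one of them $\inf_{D_{R_k/2}}w/\de^s\ge c(M_k-m_k)$ minus errors of size $KR_k^s$. The oscillation therefore shrinks by a fixed factor.

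These two functions are nonnegative only in $D_{R_k}$. Their negative parts outside, controlled by the earlier steps, must be absorbed into $K$; this is where $\al<s$ and the smallness of $\al$ are needed.

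Finally, for a $C^{1,1}$ boundary $(-\Delta)^s\de^s$ is bounded near $\p\Om$, so (a) does not force $\al<1-s$. That restriction arises naturally in the interior step. There one bounds $[u/\de^s]_{C^\al}$ on $B_{\de(x)/4}(x)$ through $[u]_{C^{s+\al}(B_{\de(x)/4}(x))}\le C\de(x)^{-\al}$, which wants $s+\al<1$. It is cosmetic anyway, since $\al$ can always be decreased.
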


Note that the Hölder regularity of $u/\delta^s$ up to the boundary is a cornerstone in the derivation of the fractional Pohozaev identity, as it ensures that the nonlocal normal derivatives are well-defined.

\subsection{Uniform bounds in a neighborhood of the boundary for the nonlinear problem.}

$$ \\ $$
\vspace{-10mm}

In this subsection, we focus on the behavior of solutions near the boundary $\partial \Omega$. A key step in obtaining global a priori estimates is to show that any potential blow-up of the solutions is localized in the interior of the domain. We begin by recalling a result by Ros-Oton and Serra regarding the boundary behavior of positive solutions.

\begin{pro}[Proposition 1.8 and Lemma 6.1 in \cite{RosOton_Serra_extremal}]  \label{lem:1}
Given a bounded convex  domain $\Om \subset {\mathbb R} ^N $, $s\in (0, 1)$ and a locally Lipschitz function $f$, let $u$ be a bounded positive solution of the semilinear problem~\eqref{eq:ell:pb}.

Then, there exist constants $d_0>0$ and $C_0>0,$ depending only on the domain $\Om$, such that
\begin{equation*}\label{bdd:4}
\|u\|_{L^\infty(\Om\setminus \Om_{d_0})}\leq C_0 \, \|u\|_{L^1(\Om)},
\end{equation*}
and
\begin{equation}\label{bdd:5}
	\|u/\delta^s\|_{L^\infty(\Om\setminus \Om_{d_0})}\leq C_0 \left(\|u\|_{L^1(\Om)}+ \|f(u)\|_{L^\infty(\Om\setminus \Om_{d_0})} \right),
\end{equation}
where $\delta(x) = dist(x,\p\Om)$ and $\Om_d=\{x\in\Om \ \text{ such that } \ \delta(x)>d\}$.
\end{pro}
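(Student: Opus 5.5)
The plan is to follow the classical de Figueiredo–Lions–Nussbaum strategy, replacing the local moving planes argument with its nonlocal counterpart. The two estimates will be proved in order: first the $L^\infty$ bound near $\p\Om$ in terms of $\|u\|_{L^1(\Om)}$, then the bound for $u/\de^s$, obtained by localizing the equation near the boundary and applying Theorem~\ref{th:GlobalBoundaryRegularity}.

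\emph{Step 1 (monotonicity near the boundary).} Fix $x_0\in\p\Om$ with outer normal $\nu(x_0)$. I would use the fractional moving planes method for positive solutions of $(-\Delta)^s u=f(u)$ with $u\equiv0$ outside $\Om$ and $f$ locally Lipschitz, in the form of Chen--Li--Li or Jarohs--Weth. This requires only positivity and the Lipschitz character of $f$ on $[0,\|u\|_\infty]$, not its sign. Move hyperplanes orthogonal to directions $e$ with $e\cdot\nu(x_0)\ge\cos\theta_0$ inward from the boundary. Convexity of $\Om$ guarantees that the reflected caps stay inside $\Om$ up to a depth $d_1>0$, and $d_1$ and $\theta_0$ depend only on the geometry of $\Om$. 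This would show that $u$ is nonincreasing along every such direction $e$ in the strip $\Om\setminus\Om_{d_1}$. The point I expect to be the main obstacle is the uniformity of $d_1,\theta_0$ with respect to $x_0$ and to the solution. In the nonlocal setting, the comparison of $u$ with its reflection uses the antisymmetric maximum principle on the cap, namely the narrow-region principle. Its validity must hold for the whole family of caps, using only that the caps are contained in $\Om$, and not any smallness tied to the Lipschitz constant of $f$. To handle this, I would start the planes at the boundary, where the caps are narrow, and push them to the maximal position allowed by the geometry, as in the classical argument.

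\emph{Step 2 ($L^\infty$ bound).} Let $d_0<d_1$ be small, depending on $\Om$. For $x\in\Om\setminus\Om_{d_0}$, Step~1 yields a truncated cone $\Gamma_x$, with vertex $x$ and axis $-\nu$, on which $u\ge u(x)$. By uniform convexity of the geometry, $\Gamma_x$ contains a set of measure at least $c(\Om)>0$. Hence
\[
u(x)\le \frac{1}{|\Gamma_x|}\int_{\Gamma_x}u\le C_0\|u\|_{L^1(\Om)} .
\]
The same argument with a larger strip, $2d_0$ in place of $d_0$ (shrinking $d_0$ if necessary), gives the bound on $\Om\setminus\Om_{2d_0}$ as well; this will be needed in Step~3.

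\emph{Step 3 (bound for $u/\de^s$).} Take a smooth cutoff $\eta$ with $\eta\equiv1$ on $\Om\setminus\Om_{d_0}$ and $\operatorname{supp}\eta\cap\Om\subset\Om\setminus\Om_{2d_0}$. Set $w=\eta u\in H^s_0(\Om)$. Then
\[
(-\Delta)^s w=\eta f(u)+\mathcal C(x),\qquad \mathcal C(x)=C_{N,s}\int_{\R^N}\frac{(\eta(y)-\eta(x))\,u(y)}{|x-y|^{N+2s}}\,dy .
\]
I would estimate the commutator $\mathcal C$ in $L^\infty(\Om)$ by splitting the integral. For $y$ near $x$, the Lipschitz bound on $\eta$ controls it by $C\|u\|_{L^\infty(\Om\setminus\Om_{2d_0})}$. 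For $y$ far from $x$, it is controlled by $C\|u\|_{L^1(\Om)}$. Both terms are then at most $C\|u\|_{L^1}$ by Step~2. At points where $\eta$ differs from $1$ the forcing term is also handled there, and one may need to exploit the structure of $\mathcal C$ more finely. A safer alternative is to compare $u$ on $\Om\setminus\Om_{d_0}$ directly with the barrier $C(\|u\|_{L^1}+\|f(u)\|_{L^\infty(\Om\setminus\Om_{d_0})})\,\phi$, where $\phi$ solves $(-\Delta)^s\phi=1$ in $\Om$ and $\phi\sim\de^s$. Applying Theorem~\ref{th:GlobalBoundaryRegularity} to $w$ (or using the barrier) then gives
\[
\|u/\de^s\|_{L^\infty(\Om\setminus\Om_{d_0})}\le C_0\big(\|u\|_{L^1(\Om)}+\|f(u)\|_{L^\infty(\Om\setminus\Om_{d_0})}\big),
\]
which is \eqref{bdd:5}.
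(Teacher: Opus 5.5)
The paper gives no proof of this statement; it imports it from Ros-Oton--Serra. Your Steps~1--2 are the de Figueiredo--Lions--Nussbaum argument transplanted to $(-\Delta)^s$: moving planes driven by a maximum principle in small domains for antisymmetric functions, followed by averaging over cones. That is the route used there, and as a sketch it is fine.

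One correction of emphasis in Step~1. The small-domain principle \emph{does} depend on the Lipschitz constant of $f$ on $[0,\|u\|_\infty]$. What makes $d_1,\theta_0$ independent of $u$ is the continuation step: the strong maximum principle for the antisymmetric function at a stopping position, then the small-measure principle on the cap minus a compact set. This forces the plane to stop only at the geometric limit.

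The genuine gap is in Step~3, where neither of your two devices works as written.
\begin{itemize}
\item[(a)] For $s\ge\frac12$, the near-diagonal part of $\mathcal C$ is not controlled by $\|\nabla\eta\|_\infty\|u\|_\infty$, because $\int_{B_r(x)}|x-y|^{1-N-2s}\,dy=+\infty$. Where $\nabla\eta(x)\ne0$ the commutator exists only as a principal value. Bounding it needs a $C^\beta$ bound on $u$ with $\beta>2s-1$ in the transition zone, i.e.\ interior estimates involving $f(u)$ there. Also, $\eta f(u)$ is supported in $\{\delta\le 2d_0\}$, so Theorem~\ref{th:GlobalBoundaryRegularity} would put $\|f(u)\|_{L^\infty(\Omega\setminus\Omega_{2d_0})}$ on the right, not the norm appearing in \eqref{bdd:5}.
\item[(b)] Comparing $u$ ``directly'' with $K\phi$ on $D=\{\delta<d_0\}$ fails because the operator is nonlocal. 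The comparison principle requires $K\phi\ge u$ on all of $\mathbb{R}^N\setminus D$, including $\Omega_{d_0}$. There $u$ can be arbitrarily large compared with $\|u\|_{L^1}+\|f(u)\|_{L^\infty(\Omega\setminus\Omega_{d_0})}$.
\end{itemize}

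The fix is to combine the two ideas and use the equation for $\eta u$ only inside $D$. Take $\eta\equiv1$ on $\{\delta\le\frac32 d_0\}$ and $\eta\equiv0$ on $\{\delta\ge 2d_0\}$. For $x\in D$, the set where $(1-\eta)u\neq0$ lies at distance at least $d_0/2$ from $x$, since $\delta$ is $1$-Lipschitz. Hence
\[
(-\Delta)^s(\eta u)(x)=f(u(x))+C_{N,s}\int_{\mathbb{R}^N}\frac{(1-\eta(y))\,u(y)}{|x-y|^{N+2s}}\,dy\le\|f(u)\|_{L^\infty(\Omega\setminus\Omega_{d_0})}+C\,d_0^{-N-2s}\|u\|_{L^1(\Omega)} .
\]
Meanwhile $0\le\eta u\le C\|u\|_{L^1(\Omega)}$ on $\mathbb{R}^N\setminus D$, by Step~2 applied on the strip $\{\delta\le 2d_0\}$.

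The weak maximum principle on $D$ then gives $\eta u\le K\phi$ there, with $K=\|f(u)\|_{L^\infty(\Omega\setminus\Omega_{d_0})}+C\|u\|_{L^1(\Omega)}$; here $C$ also absorbs $1/\min_{\{d_0\le\delta\le2d_0\}}\phi$. Finally, $\phi\le C\delta^s$ follows from Theorem~\ref{th:GlobalBoundaryRegularity} with $g\equiv1$, in the $C^{1,1}$ setting where the paper uses the result. This yields \eqref{bdd:5} with the same $d_0$ on both sides.
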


By combining these boundary estimates with a classic $L^1$-control via the first eigenfunction, we can establish uniform a priori bounds near the boundary for slightly superlinear non-linearities.

\begin{thm}\label{th:bdd}
Assume $\Omega \subset \mathbb{R}^N$ is a bounded, convex domain with $C^{1,1}$ boundary. Let $s\in (0, 1)$, $f$ be a locally Lipschitz function satisfying the superlinearity condition \ref{f2}, and $u$ be a bounded positive solution of the problem \eqref{eq:ell:pb}.

Then, there exist constants $d_1>0$ and $C_1>0,$ depending only on the domain $\Om$ and the nonlinearity $f$, such that
$$ \|u/\delta^s\|_{L^\infty(\Om\setminus \Om_{d_1})}\leq C_1. $$
\end{thm}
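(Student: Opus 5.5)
Combining Proposition~\ref{lem:1} with a uniform $L^1(\Om)$ bound proves the statement. Its two estimates reduce the claim to controlling $\|u\|_{L^1(\Om)}$ and $\|f(u)\|_{L^\infty(\Om\setminus\Om_{d_0})}$ uniformly. We get both from the first eigenfunction and the superlinearity \ref{f2}. We take $d_1=d_0$ from Proposition~\ref{lem:1}.

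\textbf{Step 1 (uniform weighted $L^1$ bound).} Let $\varphi_1>0$ be the first eigenfunction of $(-\Delta)^s$ in $\Om$ with exterior Dirichlet condition, normalized so that $\|\varphi_1\|_\infty=1$. By \ref{f2}, fix $\mu>\la_{1,s}$ and $T>0$ with $f(t)\ge \mu t$ for $t\ge T$. Since $f$ is locally Lipschitz and $f\ge0$ near infinity, there is $K\ge0$ with $f(t)\ge \mu t-K$ for all $t\ge0$. Testing the equation with $\varphi_1$ and using the symmetry of the bilinear form gives
$$\la_{1,s}\int_\Om u\varphi_1=\int_\Om f(u)\varphi_1\ge \mu\int_\Om u\varphi_1-K\int_\Om\varphi_1 .$$
Hence $\int_\Om u\varphi_1\le K|\Om|/(\mu-\la_{1,s})$ uniformly in $u$. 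Testing is legitimate since $u$ is bounded and $\varphi_1\in H^s_0(\Om)$.

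\textbf{Step 2 (from weighted to unweighted $L^1$).} Here we use the convexity and moving-plane structure behind Proposition~\ref{lem:1}, following de Figueiredo--Lions--Nussbaum and Ros-Oton--Serra. For convex $\Om$, $u$ is monotone along inward directions near $\p\Om$. Consequently, for each $x\in\Om\setminus\Om_{d_0}$ there is a cone or region $I_x\subset\Om_{d_0/2}$ of measure bounded below with $u(x)\le u(y)$ for all $y\in I_x$. This yields $\|u\|_{L^1(\Om\setminus\Om_{d_0})}\le C\int_{\Om_{d_0/2}}u$. On $\Om_{d_0/2}$ we have $\varphi_1\ge c>0$ by Hopf-type behavior $\varphi_1\sim\delta^s$, so $\int_{\Om_{d_0/2}}u\le c^{-1}\int u\varphi_1$. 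Therefore $\|u\|_{L^1(\Om)}\le C$ uniformly. If Lemma~6.1 of \cite{RosOton_Serra_extremal} already states the bound in terms of $\int u\varphi_1$ or of $\|u\|_{L^1(\Om_{d_0})}$, we simply invoke it. I expect this step to be the main obstacle: the fractional moving-plane argument has to be quoted carefully rather than reproved.

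\textbf{Step 3 (conclusion).} By the first estimate of Proposition~\ref{lem:1}, $\|u\|_{L^\infty(\Om\setminus\Om_{d_0})}\le C_0\|u\|_{L^1(\Om)}\le M$, with $M$ depending only on $\Om$ and $f$. Since $f$ is continuous, $\|f(u)\|_{L^\infty(\Om\setminus\Om_{d_0})}\le\max_{[0,M]}|f|$. Inserting both bounds into \eqref{bdd:5} gives $\|u/\delta^s\|_{L^\infty(\Om\setminus\Om_{d_0})}\le C_1$, so the theorem holds with $d_1=d_0$.
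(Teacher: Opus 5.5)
Your Steps~1 and~3 match the paper: you test with $\phi_{1,s}$ and use \ref{f2} to bound $\int_\Om u\,\phi_{1,s}$, then insert an $L^1$ bound into both estimates of Proposition~\ref{lem:1}. Your route is sound in outline, but Step~2, where you pass from the weighted bound to $\|u\|_{L^1(\Om)}$, goes differently and rests on a lemma you have not secured. You open up the moving-plane argument. For every $x$ in the strip you need a set $I_x$ of measure bounded below, contained in a \emph{fixed} compact subset of $\Om$ where $\phi_{1,s}\ge c>0$, with $u(x)\le u(y)$ on $I_x$. That is the classical de Figueiredo--Lions--Nussbaum construction, but it is strictly more than Proposition~\ref{lem:1} as quoted, which controls the strip only through the full norm $\|u\|_{L^1(\Om)}$. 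You would also have to shrink $d_0$ to the width of the strip where the fractional moving-plane monotonicity holds, so ``$I_x\subset\Om_{d_0/2}$ with the same $d_0$'' is not automatic.

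The paper needs none of this. It picks $d_1\le d_0$ with $C_0|\Om\setminus\Om_{d_1}|\le\frac12$. It bounds $\int_{\Om_{d_1}}u\le \widetilde C/m_{d_1}$, where $m_{d_1}=\min_{\overline{\Om_{d_1}}}\phi_{1,s}>0$. Then it writes
$$\|u\|_{L^\infty(\Om\setminus\Om_{d_1})}\le C_0\Big(\int_{\Om_{d_1}}u+|\Om\setminus\Om_{d_1}|\,\|u\|_{L^\infty(\Om\setminus\Om_{d_1})}\Big),$$
and absorbs the last term, which is finite since $u$ is bounded. This gives uniform bounds on $\|u\|_{L^\infty(\Om\setminus\Om_{d_1})}$ and hence on $\|u\|_{L^1(\Om)}$.

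So the paper uses Proposition~\ref{lem:1} purely as a black box, at the cost of a possibly smaller $d_1$. Your route, once the localized $I_x$ lemma is properly cited, keeps $d_1=d_0$. It also yields the pointwise bound $u(x)\le C\int_\Om u\,\phi_{1,s}$ in the strip directly.
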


\begin{proof}
	
Let $u$ be a positive weak solution of \eqref{eq:ell:pb}. First, we show that the $L^1$ norm of $u$ is bounded. Following the method in \cite{Turner, deF_Lion_Nus_1982}, the superlinearity of $f$ implies that the solutions satisfy an $L^1$ estimate against the first eigenfunction $\phi_{1,s}$. Let $\lambda_{1,s}$ be the first eigenvalue of $(-\Delta)^s$ in $\Omega$ with zero Dirichlet conditions. By the self-adjointness of the operator:
\begin{equation*}
	\lambda_{1,s} \int_{\Omega} u \phi_{1,s} = \int_{\Omega} (-\Delta)^s u \phi_{1,s} = \int_{\Omega} f(u) \phi_{1,s}.
\end{equation*}
From the superlinearity condition \ref{f2}, for any $\varepsilon > 0$, there exists $C_\varepsilon\geq 0$ such that $f(t) \geq (\lambda_{1,s} + \varepsilon)t - C_\varepsilon$. Choosing $\varepsilon > 0$, we obtain
\begin{equation*}\label{cota:f:L13}
\la_{1,s}\int_{\Om}u\,\phi_{1,s}  = \int_{\Om}f(u)\,\phi_{1,s} \ge (\la_{1,s}+\varepsilon)\int_{\Om}u\,\phi_{1,s} -C_\varepsilon,
\end{equation*}
which leads to the uniform estimate:
\begin{equation}\label{cota:f:L10}
	\int_{\Omega} u \phi_{1,s} \leq \widetilde{C}.
\end{equation}

Since $\phi_{1,s} > 0$ in $\Omega$ (by the Strong Maximum Principle for the fractional Laplacian \cite{Silvestre}), and the set $\overline{\Omega_{d_1}}$ is a compact subset of $\Omega$, we have $m_{d_1} := \min_{\Omega_{d_1}} \phi_{1,s} > 0$. Thus, \eqref{cota:f:L10} yields
\begin{equation*}\label{int_u_mid}
	\int_{\Omega_{d_1}} u \leq \frac{1}{m_{d_1}} \int_{\Omega_{d_1}} u \phi_{1,s} \leq \frac{\widetilde{C}}{m_{d_1}} =: \widetilde{C}_1.
\end{equation*}

Next, we use an absorption argument. Let $0<d_1 \leq d_0$ be small enough such that $C_0 \, |\Omega \setminus \Omega_{d_1}| \leq 1/2$, where $C_0$ is the constant from Proposition~\ref{lem:1}. Splitting the $L^1$ norm and using the $L^\infty(\Om)$ bound near the boundary, it follows
\begin{align*}
	\|u\|_{L^\infty(\Omega \setminus \Omega_{d_1})} &\leq C_0 \left( \int_{\Omega_{d_1}} u + \int_{\Omega \setminus \Omega_{d_1}} u \right) \\
	&\leq C_0 \widetilde{C}_1 + C_0 |\Omega \setminus \Omega_{d_1}| \|u\|_{L^\infty(\Omega \setminus \Omega_{d_1})} \\
	&\leq C_0 \widetilde{C}_1 + \frac{1}{2} \|u\|_{L^\infty(\Omega \setminus \Omega_{d_1})}.
\end{align*}
This gives 
$$\|u\|_{L^\infty(\Omega \setminus \Omega_{d_1})} \leq 2 C_0 \widetilde{C}_1.$$ 
Consequently, the total $L^1$ norm is also bounded by
$$\|u\|_{L^1(\Omega)} \leq \widetilde{C}_1 + 2 C_0 \widetilde{C}_1 |\Omega \setminus \Omega_{d_1}| \leq 2\,\widetilde{C}_1.$$

Finally, applying \eqref{bdd:5} from Proposition \ref{lem:1} and using the fact that the nonlinearity $f$ is locally Lipschitz (and thus bounded on bounded sets), we conclude
\begin{align*}
	\|u/\delta^s\|_{L^\infty(\Om\setminus \Om_{d_1})}
	&\leq C_0 \left(\|u\|_{L^1(\Om)}+ \|f(u)\|_{L^\infty(\Om\setminus \Om_{d_0})} \right)\\
	&\leq 2\,C_0 \, \widetilde{C}_1+ C_0\, \|f\|_{L^\infty(0,\|u\|_{L^\infty(\Om\setminus \Om_{d_0})})} \\
	&\leq 2\,C_0 \, \widetilde{C}_1+ C_0\,\|f\|_{L^\infty(0,2\,C_0 \, \widetilde{C}_1)} =:C_1.
\end{align*}
\end{proof}

\subsection{Pohozaev Identity for the fractional Laplacian.}

$$ \\ $$
\vspace{-13mm}

A fundamental tool in the study of semilinear elliptic equations, both in the classical and fractional frameworks, is the Pohozaev identity. This integral relation provides a global balance between the interior energy of the solution and its behavior near the boundary. In the fractional setting, this identity was established by Ros-Oton and Serra in \cite{RosOton_Serra_poh}. 

A striking feature of the fractional Pohozaev identity is that the role of the classical normal derivative $\partial u / \partial \nu$ is played by the nonlocal analogue $u/\delta^s|_{\partial \Omega}$, that was also present in the regularity results from the previous subsections. 

\begin{thm}[Theorem 1.1. in \cite{RosOton_Serra_poh}]  \label{Pohozaev}
Given a bounded $C^{1,1}$ domain $\Om \subset {\mathbb R} ^N $, $s\in (0, 1)$, and a locally Lipschitz function $f$, let $u$ be a bounded solution of the semilinear problem~\eqref{eq:ell:pb}.
Then,
$$ 
u/\de^s \in C^\alpha(\overline{\Omega}),\qq{for some} \alpha\in (0, 1),$$
meaning that $u/\de^s$ has a continuous extension up to the boundary $\partial \Om$ which is $C^\alpha(\overline{\Om})$, and the
following identity holds
\begin{equation*}\label{Poho:Id}
	2N \int_{\Omega} F(u) \, dx - (N-2s) \int_{\Omega} f(u)u \, dx = \Gamma(1+s)^2 \int_{\partial\Omega} \left(\frac{u}{\delta^s}\right)^2 (x \cdot \nu) \, d\sigma,
\end{equation*}
where $F(t) := \int _0 ^t f(r) \, dr $ is a primitive of the function $f$,
$\nu (x)$ is the unit outward normal to $\p\Om$ at $x$, and $\Ga$ is the Gamma function.	
\end{thm}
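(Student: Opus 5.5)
The plan is to derive the identity by testing the equation against the generator of dilations, $x\cdot\nabla u$, as in the classical Pohozaev argument. The boundary term will come from the singular behaviour of $(-\Delta)^{s/2}u$ near $\partial\Omega$, not from an integration by parts.

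\emph{Regularity.} Since $u$ is bounded and $f$ is locally Lipschitz, the right-hand side $g=f(u)$ lies in $L^\infty(\Omega)$. Theorem~\ref{th:GlobalBoundaryRegularity} then gives $u\in C^s(\mathbb{R}^N)$ and $u/\delta^s\in C^\alpha(\overline{\Omega})$. This is the first assertion. Interior regularity (Theorem~\ref{th:local:reg} plus a bootstrap) gives $u\in C^{1,\beta}_{\rm loc}(\Omega)$, with $|\nabla u|\le C\delta^{s-1}$ near the boundary. So $x\cdot\nabla u\in L^1(\Omega)$ and the pairing with $f(u)$ makes sense.

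\emph{Interior side.} Since $f(u)\,(x\cdot\nabla u)=x\cdot\nabla F(u)$ and $F(u)=0$ on $\partial\Omega$, integration by parts gives
$$\int_\Omega (x\cdot\nabla u)\,f(u)\,dx=-N\int_\Omega F(u)\,dx .$$
Testing the weak formulation with $u$ gives $\int_\Omega u(-\Delta)^s u=\int_\Omega uf(u)$. The identity therefore reduces to the purely linear statement
$$\int_\Omega (x\cdot\nabla u)(-\Delta)^s u\,dx=\frac{2s-N}{2}\int_\Omega u(-\Delta)^s u\,dx-\frac{\Gamma(1+s)^2}{2}\int_{\partial\Omega}\Big(\frac{u}{\delta^s}\Big)^2(x\cdot\nu)\,d\sigma .$$
Combining the three displays yields the stated identity.

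\emph{Linear identity.} I would first treat the case where $\Omega$ is strictly star-shaped with respect to the origin, and then pass to general $C^{1,1}$ domains by a partition of unity, applying the star-shaped result locally around boundary points. Set $u_\lambda(x)=u(\lambda x)$ and $w=(-\Delta)^{s/2}u$. Then
$$\int_\Omega (x\cdot\nabla u)(-\Delta)^s u=\frac{d}{d\lambda}\Big|_{\lambda=1^+}\int u_\lambda(-\Delta)^s u .$$
By scaling and Plancherel,
$$\int u_\lambda(-\Delta)^s u=\lambda^{\frac{2s-N}{2}}\int_{\mathbb{R}^N} w_{\sqrt\lambda}\,w_{1/\sqrt\lambda}\,dx .$$
Differentiating the power produces the term $\frac{2s-N}{2}\int u(-\Delta)^s u$. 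What remains is
$$\frac{d}{d\lambda}\Big|_{\lambda=1^+}\int w_{\sqrt\lambda}w_{1/\sqrt\lambda},$$
which would vanish if $w$ were smooth. However, $w$ is smooth in $\Omega$ and in $\mathbb{R}^N\setminus\overline\Omega$ but has a singularity across $\partial\Omega$.

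\emph{Main obstacle.} This last derivative is where I expect the real difficulty. Using $u=\delta^s\,(u/\delta^s)$ with $u/\delta^s$ Hölder up to the boundary, I would show that near $\partial\Omega$
$$w(x)=c_1\,\frac{u}{\delta^s}(x_0)\Big(\log^-\delta(x)+c_2\,\chi_\Omega(x)\Big)+h(x),$$
where $x_0$ is the projection of $x$ on $\partial\Omega$ and $h$ is a $C^\alpha$ remainder. The derivative then localizes to an integral over $\partial\Omega$, with weight $(x\cdot\nu)$ coming from the normal component of the dilation. Its coefficient reduces to a one-dimensional computation for the explicit solution $(x_+)^s$ of the half-line problem. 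I would evaluate that computation by Fourier transform to obtain the constant $-\Gamma(1+s)^2/2$. Controlling the remainder terms uniformly as $\lambda\to1$ needs the $C^\alpha$ bound on $u/\delta^s$, which is why that regularity is established first.
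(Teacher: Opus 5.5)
Your outline is the Ros-Oton--Serra proof that the paper cites without reproducing: $C^s$ and $C^\alpha$ regularity of $u/\delta^s$ from the bounded right-hand side, reduction to the linear identity for $\int_\Omega (x\cdot\nabla u)(-\Delta)^s u$, the dilation argument with $w=(-\Delta)^{s/2}u$ and $\lambda^{\frac{2s-N}{2}}\int w_{\sqrt\lambda}w_{1/\sqrt\lambda}$, the boundary expansion $w=c_1\big(\log^-\delta+c_2\chi_\Omega\big)\,u/\delta^s+h$, and a one-dimensional computation for $(x_+)^s$ that fixes the constant $\Gamma(1+s)^2$. The step stated too loosely is the passage to non-star-shaped domains: because the identity is quadratic, the partition of unity must be applied to its polarized (bilinear) form for pairs $\psi_i u,\psi_j u$ (pairs with disjoint supports are checked directly from the kernel, with no boundary term), and since the local pieces are star-shaped about different centres you also need translation invariance, i.e.\ $\int_\Omega \partial_i u\,(-\Delta)^s u\,dx=-\frac{\Gamma(1+s)^2}{2}\int_{\partial\Omega}(u/\delta^s)^2\nu_i\,d\sigma$, which follows by comparing the star-shaped identities for all centres in a small ball (a minor point: for $s\le\frac12$ the bound $|\nabla u|\le C\delta^{s-1}$ needs rescaled H\"older interior estimates, not just the $W^{2s,q}$ estimate of Theorem~\ref{th:local:reg}).
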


The importance of this identity lies in its ability to detect the critical growth of the nonlinearity. Indeed, for the critical power $f(u) = u^{2^*_s-1}$, the coefficients on the left-hand side satisfy $2N/2^*_s = N-2s$, which causes the interior terms to cancel out in a specific way, revealing the obstruction to existence in star-shaped domains. In our context, this identity will be used to derive the uniform $L^\infty(\Om)$ bounds via a contradiction argument.

\subsection{Some properties of regularly varying functions}
\label{subsection:rvf}
$$ \\ $$
\vspace{-13mm}

Trivially,  for  any function $g\in RV_q$ there exists $L\in RV_0$ such that   $g(t)=t^q L(t)$ for $t>0$ large. 
\medskip

The following result was proved by Karamata \cite{Karamata} in the continuous case and extended by Korevaar, van Aardenne-Ehrenfest and de Bruijn in the measurable case, see \cite{Korevaar_Aardenne-Ehrenfest_Bruijn}, for the following result see 
\cite[Theorem 1.1]{Seneta}  and \cite[Theorem 1.5.2]{Bingham}.

\begin{thm}[Uniform Convergence Theorem]
\label{usfF}
Let $g\in C(0,+\infty)$ positive at infinity and $g\in RV_q$ for some $q\in\R$.  Let $G(t)=:\int_0^s g(t)\,dt$. 
Then: 
\begin{enumerate}[label=\rm{(\roman*)}]
\item  $G$ is a function regularly varying at infinity of index $q+1$,
and satisfies
\begin{align} 
\lim_{t\to+\infty}\frac{tg(t)}{G(t)}=q+1.
\label{eq:usfF}
\end{align}
\item  Furthermore, for all $a,  b\in \R^+,$ $0<a<b$,  the limit  \eqref{reva}  
is uniform for $ \tau \in [a,b].$
\item  
Moreover, if $g$ is bounded close to 0 and $q>0$,  then
the limit  \eqref{reva} is uniform for $ \tau \in (0,b]$, for any $b>0$. 
\end{enumerate} 
\end{thm}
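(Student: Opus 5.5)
The plan is to prove part (ii) first, since it is the engine for the other two parts. Then I derive (i) and (iii) from it, using Potter-type bounds and dominated convergence. Throughout I write $g(t)=t^qL(t)$ with $L\in SV$. For (i) I assume $q>-1$, which is implicit because $G$ is integrated from $0$; the only case used later in the paper is $q\ge 0$.

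\emph{Step 1: uniform convergence, part (ii).} Set $h(x):=\ln L(e^x)$. This is continuous for large $x$, and $h(x+u)-h(x)\to 0$ as $x\to\infty$ for every fixed $u\in\R$. It suffices to show the convergence is uniform for $u\in[0,1]$, by symmetry and the multiplicative structure.

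Suppose it is not. Then there are $\varepsilon>0$, $x_n\to\infty$ and $u_n\in[0,1]$ with $|h(x_n+u_n)-h(x_n)|\ge\varepsilon$. Define
$$E_n:=\{u\in[0,2]: |h(x_m+u)-h(x_m)|<\varepsilon/2 \ \forall m\ge n\},$$
$$V_n:=\{v\in[0,2]: |h(x_m+u_m+v)-h(x_m+u_m)|<\varepsilon/2\ \forall m\ge n\}.$$
These sets are measurable by continuity of $g$ and increase to $[0,2]$. Hence for large $n$ both $|E_n|$ and $|V_n|$ exceed $3/2$.

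Both $E_n$ and $u_n+V_n$ lie in $[0,3]$, and their measures sum to more than $3$, so they intersect. Take $u=u_n+v$ in the intersection. The triangle inequality then gives $|h(x_n+u_n)-h(x_n)|<\varepsilon$, a contradiction.

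\emph{Step 2: Potter bounds.} Uniformity on $[1,2]$ gives $L(2t)/L(t)\le 2^\delta$ for $t\ge t_0$, and also the reverse inequality. Iterating dyadically yields, for every $\delta>0$, some $t_0$ with
$$\frac{L(\tau t)}{L(t)}\le C\max\{\tau^\delta,\tau^{-\delta}\}\qquad\text{whenever } t,\ \tau t\ge t_0.$$

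\emph{Step 3: part (i).} Write
$$\frac{G(t)}{tg(t)}=\frac{1}{tg(t)}\int_0^{t_0}g+\int_{t_0/t}^1\frac{g(\tau t)}{g(t)}\,d\tau.$$
Since $tg(t)\in RV_{q+1}$ with $q+1>0$, we have $tg(t)\to\infty$, so the first term tends to $0$. In the second term the integrand converges pointwise to $\tau^q$. By Step 2 it is dominated by $C\tau^{q-\delta}$, which is integrable on $(0,1)$ once $\delta<q+1$.

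Dominated convergence then gives the limit $\int_0^1\tau^q\,d\tau=1/(q+1)$, which is \eqref{eq:usfF}. The regular variation of $G$ follows because
$$\frac{G(\tau t)}{G(t)}=\frac{G(\tau t)}{\tau t g(\tau t)}\cdot\frac{\tau t g(\tau t)}{t g(t)}\cdot\frac{tg(t)}{G(t)}\to\tau^{q+1}.$$

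\emph{Step 4: part (iii).} Now $q>0$ and $g$ is bounded near $0$. Given $\varepsilon$, choose $a$ small and split $\tau\in(0,a]$ into two cases.

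If $\tau t\ge t_0$, Step 2 bounds $g(\tau t)/g(t)$ by $C\tau^{q-\delta}\le Ca^{q-\delta}$.

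If $\tau t<t_0$, then $g(\tau t)\le \sup_{(0,t_0]}g<\infty$, while $g(t)\to\infty$ because $q>0$.

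In both cases $|g(\tau t)/g(t)-\tau^q|<\varepsilon$ for large $t$. On $[a,b]$ uniformity is part (ii).

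The main obstacle is Step 1, specifically the measure-theoretic intersection argument and the measurability of the sets involved. Continuity of $g$ removes the measurability issue.
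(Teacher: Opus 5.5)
Your proof is correct. The paper gives no argument of its own for this theorem. It cites Karamata, Korevaar--van Aardenne-Ehrenfest--de Bruijn, Seneta's Theorem 1.1 and Theorem 1.5.2 of Bingham--Goldie--Teugels, plus, in Step 3 of Lemma 3.1, Karamata's integral theorem (their Theorem 1.5.11).

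What you wrote is essentially the textbook route behind those citations:
\begin{itemize}
\item the measure-theoretic intersection argument for the Uniform Convergence Theorem in additive form;
\item Potter-type bounds obtained by dyadic iteration;
\item dominated convergence for Karamata's theorem;
\item splitting $(0,a]$ according to whether $\tau t\ge t_0$ for part (iii).
\end{itemize}
So you gain self-containment. You lose no generality for the paper's uses, where (i) and Karamata's theorem are only applied with index $\ge 1$ and (iii) with positive index.

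A few small points should be tidied up.
\begin{itemize}
\item Your restriction to $q>-1$ in (i) is not merely ``implicit''; it is necessary. The reason is not the lower limit of integration. For $g(t)=1/(1+t^2)\in RV_{-2}$ one gets $G=\arctan$, which is slowly varying rather than in $RV_{-1}$, and $tg(t)/G(t)\to 0\neq q+1$. So the statement as written is false for $q<-1$.
\item The borderline case $q=-1$ is not reached by your dominated-convergence step, since the majorant $C\tau^{-1-\delta}$ is not integrable on $(0,1)$. The paper never uses that case.
\item In Step 4, $g$ is only assumed positive at infinity, so the bound should read $|g(\tau t)|\le\sup_{(0,t_0]}|g|$.
\item In Step 3, the fact that $tg(t)\to\infty$ deserves one line: it follows from your Potter bound with $\delta<q+1$.
\item Like the statement itself, Step 3 tacitly needs $g\in L^1(0,t_0)$ for $G$ to be defined.
\end{itemize}
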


\section{Proof of the main results}
\label{tec:4}

We recall that Theorem~\ref{th:apriori:intro} addresses the critical case $q = \frac{N+2s}{N-2s}$. Under assumption \ref{f1}, this slightly subcritical behavior implies that $L(t) \to 0$ as $t \to \infty$. From now on, throughout this section, $C$ will denote various generic positive constants that are independent of the solution $u$.

\begin{lem}\label{lem:3.1}
Assume that $f$ satisfies hypotheses {\rm \ref{f1}--\ref{f3}}, and let $L$ be the slowly varying function in the sense of Definition~\ref{Def:RVandSV}. Then, there exists a constant $C > 0$ such that for any positive weak solution $u \in H_0^{s}(\Omega)$ to \eqref{eq:ell:pb}, the following uniform integral bound holds:
\begin{equation}\label{u:fu:C}
\int_\Omega u^{\frac{2N}{N-2s}+1} |L'(u)| \, dx \le C. 
\end{equation}
\end{lem}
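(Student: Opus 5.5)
Combine the Pohozaev identity (Theorem~\ref{Pohozaev}) with the boundary estimate of Theorem~\ref{th:bdd}. First, each positive weak solution is bounded (by the critical-growth regularity recalled in the introduction, since \ref{f1} gives $f(t)\le Ct^{2^*_s-1}$), so Theorem~\ref{Pohozaev} applies. The convexity of $\Om$ and the $C^{1,1}$ boundary give $|x\cdot\nu|\le \operatorname{diam}\Om$ on $\p\Om$, and the continuous extension of $u/\de^s$ to $\p\Om$ is bounded there by $C_1$ from Theorem~\ref{th:bdd}. Hence the right-hand side of the Pohozaev identity is bounded by $\Ga(1+s)^2 C_1^2 |\p\Om|\operatorname{diam}\Om$, uniformly in $u$. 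Dividing by $N-2s$ and using $2N/(N-2s)=2^*_s$, we obtain
$$\int_\Om\big(2^*_s F(u)-u f(u)\big)\,dx\le C.$$

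Next, we express the integrand through $L'$. With $q=2^*_s-1$ and $f(t)=t^qL(t)$, integration by parts gives
$$F(t)=\frac{t^{q+1}L(t)}{q+1}-\frac{1}{q+1}\int_0^t \tau^{q+1}L'(\tau)\,d\tau+c_0 ,$$
where $c_0$ absorbs the behaviour near $0$. Working on $[t_1,t]$ and treating $[0,t_1]$ as a bounded contribution is cleaner. Since $q+1=2^*_s$, this yields
$$2^*_sF(t)-tf(t)=\int_{t_1}^t\tau^{2^*_s}|L'(\tau)|\,d\tau - C \qquad\text{for } t\ge t_1,$$
using $L'<0$ from \ref{f3}. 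Now $\tau^{2^*_s}|L'(\tau)|\in RV_{2^*_s-1}$ because $|L'|\in RV_{-1}$. Applying Theorem~\ref{usfF}(i), with a continuous positive $g$ agreeing with it for large $\tau$, we get
$$\int_{t_1}^t\tau^{2^*_s}|L'(\tau)|\,d\tau\sim \frac{t^{2^*_s+1}|L'(t)|}{2^*_s}\qquad\text{as } t\to\infty .$$
So there are $t_2$ and $c>0$ with $2^*_sF(t)-tf(t)\ge c\,t^{2^*_s+1}|L'(t)|$ for $t\ge t_2$.

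Finally, split $\Om$ into $\{u\ge t_2\}$ and $\{u<t_2\}$. On the first set the integrand $u^{2^*_s+1}|L'(u)|$ is at most $c^{-1}\big(2^*_sF(u)-uf(u)\big)$. On the second set both this integrand and $|2^*_sF(u)-uf(u)|$ are bounded by a constant depending only on $f$ and $t_2$, since $L\in C^1$. Therefore
$$\int_\Om u^{2^*_s+1}|L'(u)|\le c^{-1}\int_\Om\big(2^*_sF(u)-uf(u)\big)+C|\Om|\le C .$$

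The main obstacle is the asymptotic comparison in the second step, namely justifying the Karamata equivalence for the integral of $\tau^{2^*_s}|L'|$ and handling the sign of $2^*_sF-tf$ on bounded ranges. Theorem~\ref{usfF} handles the first point once $g$ is modified near $0$ to be continuous and positive. The second point only affects constants.
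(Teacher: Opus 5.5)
Your proposal is correct and follows the paper's proof essentially step by step: the Pohozaev identity combined with the boundary bound of Theorem~\ref{th:bdd} controls $\int_\Omega \big(2^*_sF(u)-uf(u)\big)\,dx$; the relation $(2^*_sF-tf)'=-t^{2^*_s}L'$ (which you obtain by integrating by parts) together with Karamata's theorem gives $2^*_sF(t)-tf(t)\sim t^{2^*_s+1}|L'(t)|/2^*_s$; and splitting $\Omega$ according to the size of $u$ concludes. The one justification to tighten is on $\{u<t_2\}$: $L\in C^1(0,+\infty)$ alone does not bound $|L'|$ near $0$, but $t^{2^*_s+1}|L'(t)|=t\,\big|tf'(t)-(2^*_s-1)f(t)\big|$ is bounded on $(0,t_2]$ because $f$ is locally Lipschitz, which is already required to apply Theorems~\ref{th:bdd} and~\ref{Pohozaev} (the paper's Step~4 uses this bound tacitly as well).
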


\begin{proof}
We split the proof into four steps. First, we obtain a uniform boundary estimate. Next, we use this boundary control within the fractional Pohozaev identity. Then, we establish an asymptotic equivalence for the energy functional, and finally, we conclude the uniform integral estimate.

\noindent {\bf Step 1}. {\it Boundary uniform estimates}.

By Theorem \ref{th:bdd}, we know that there exist constants $d_1>0$ and $C_1 > 0$, depending only on $f$ and $\Omega$, such that
$$\|u/\delta^s\|_{L^\infty(\Omega \setminus \Omega_{d_1})} \leq C_1.$$
Furthermore, Theorem~\ref{Pohozaev} ensures that the function $u/\delta^s$ admits a continuous extension up to the boundary $\partial \Omega$. Consequently, the boundary values satisfy
$$\|u/\delta^s\|_{L^\infty(\partial \Omega)} \leq C_1.$$

\medskip

\noindent{\bf Step 2}. {\it  A Pohozaev's type estimate}.

Applying the fractional Pohozaev identity (Theorem~\ref{Pohozaev}), any positive solution $u$ of \eqref{eq:ell:pb} satisfies
$$ \frac{2N}{N-2s} \int_\Omega F(u) \, dx - \int_\Omega u f(u) \, dx = \frac{\Gamma(1 + s)^2}{N-2s} \int_{\partial \Omega} \left(\frac{u}{\delta(x)^s}\right)^2 (x \cdot \nu) \, d\sigma, $$
where $\nu(x)$ denotes the unit outward normal to $\partial \Omega$ at $x$. Since the right-hand side is uniformly bounded independently of $u$ thanks to the boundary control from Step 1, we obtain
$$	\left| \frac{2N}{N-2s} \int_\Omega F(u) \, dx - \int_\Omega u f(u) \, dx \right| \le C. $$

\medskip

\noindent \textbf{Step 3.} \textit{Asymptotic equivalence at infinity.}

We claim that
$$ \lim_{t \to +\infty} \frac{\frac{2N}{N-2s} F(t) - t f(t)}{t^{\frac{2N}{N-2s}+1} |L'(t)|} = \frac{N-2s}{2N}. $$
Indeed, one can check that
$$ \left( \frac{2N}{N-2s} F(t) - t f(t) \right)' = -t^{\frac{2N}{N-2s}}\,L'(t), $$
which gives
$$ \lim_{t \to +\infty} \frac{\frac{2N}{N-2s} F(t) - t f(t)}{t^{\frac{2N}{N-2s}+1} |L'(t)|} = \lim_{t \to +\infty} \frac{-\int_0^t \tau^{\frac{2N}{N-2s}} L'(\tau) \, d\tau}{t^{\frac{2N}{N-2s}+1} |L'(t)|}. $$

To evaluate this limit, let us define 
$$g(t) := t^{\frac{2N}{N-2s}} |L'(t)| \quad \text{ and its primitive } \quad G(t) := \int_0^t g(\tau) \, d\tau.$$
Since $|L'| \in RV_{-1}$ by hypothesis \ref{f3}, one can check directly by the definition of regularly varying functions \eqref{reva} that $g \in RV_{\frac{N+2s}{N-2s}}$. By Karamata's Theorem for integrals (see Theorem 1.5.11. in \cite{Bingham}), we deduce that $G \in RV_{\frac{2N}{N-2s}}$ and
$$\lim_{t \to +\infty} \frac{t \, g(t)}{G(t)} = \frac{2N}{N-2s},$$
which explicitly translates to
$$	\lim_{t \to +\infty} \frac{\int_0^t \tau^{\frac{2N}{N-2s}} |L'(\tau)| \, d\tau}{t^{\frac{2N}{N-2s}+1} |L'(t)|} = \lim_{t \to +\infty} \frac{G(t)}{t \, g(t)} =  \frac{N-2s}{2N}. $$
Finally, recalling from \ref{f3} that $L'(t) < 0$ for all $t \ge t_1$, we conclude the claim.

\bigskip

\noindent \textbf{Step 4.} \textit{A uniform estimate conclusion.}
\smallskip

On the one hand, from the asymptotic equivalence established in Step 3, there exists $T > 0$ large enough such that for all $t > T$, it follows
$$ \frac{2N}{N-2s} F(t) - t f(t) \ge \frac{1}{2} \left( \frac{N-2s}{2N} \right) t^{\frac{2N}{N-2s}+1} |L'(t)|.$$

On the other hand, since both $f$ and its primitive $F$ are bounded on the compact set $[0, T]$ we have 
$$ \frac{2N}{N-2s} F(t) - t f(t) \ge -C$$
for all $t\in [0,T]$.

Hence, integrating over the domain $\Omega$, which we split into regions where $u > T$ and $u \le T$, we obtain
$$
\frac{2N}{N-2s} \int_\Omega F(u) \, dx - \int_\Omega u f(u) \, dx \ge \frac{N-2s}{4N} \int_\Omega u^{\frac{2N}{N-2s}+1} |L'(u)| \, dx - C,
$$
for some constant $C > 0$ independent of $u$. Combining this lower bound with the Pohozaev upper bound derived in Step 2 directly yields the conclusion of the result.

\end{proof}

With the uniform integral estimate of Lemma \ref{lem:3.1} at hand, we are now ready to prove our main result. The core mechanism of the forthcoming proof relies on translating this global integral bound into a pointwise $L^\infty(\Omega)$ estimate. To achieve this, we will assume by contradiction that the supremum of a sequence of solutions blows up, and we will carefully track how the volume of the regions where the solutions peak behaves. This local analysis will allow us to evaluate the integral bound from below, ultimately forcing a clash with the structural hypothesis \eqref{hypothesis1:intro}.

\begin{proof}[Proof of Theorem \ref{th:apriori:intro}]
	
Before detailing the estimates, let us outline the strategy of the proof. We will estimate from below the radius of a ball where a solution exceeds half of its $L^\infty(\Omega)$ norm. This technique has been successfully employed in the context of the classical Laplacian for single equations and systems, as well as for the $p$-Laplacian (see, for instance, \cite{Castro_Pardo_RMC_2015, Cuesta_Pardo_MedJM, Mavinga_Pardo_JMAA, Damascelli_Pardo}). 

To achieve this lower bound in our nonlocal setting, we will combine fractional elliptic regularity theory in $W^{2s,q}$ for $q \in \left( \frac{N}{2s}, \frac{N}{s} \right) \cap [2,+\infty)$, fractional Sobolev and Morrey embeddings, and the asymptotic properties of slowly varying functions. 

By feeding this radius estimate back into the integral bound obtained in Lemma \ref{lem:3.1}, we will show that if there exists a sequence of positive weak solutions $\{u_n\}_{n \in \mathbb{N}} \subset H_0^{s}(\Omega)$ to \eqref{eq:ell:pb} such that $\|u_n\|_\infty \to +\infty$, then they must satisfy the following asymptotic behavior:
$$ \frac{\|u_n\|_\infty \, \big|L'(\|u_n\|_\infty)\big|}{L^{\frac{N}{2s}}(\|u_n\|_\infty)} \le C < +\infty,$$
for some constant $C$ independent of $n$. This uniform bound clearly contradicts the superlinear growth assumption \eqref{hypothesis1:intro}, implying that the $L^\infty(\Om)$ norm of all positive solutions must be uniformly bounded, thereby proving our result.

Let us assume by contradiction that the result is not true. Then, there exist $\{u_n\}_{n\in \mathbb{N}} \subset H_0^{s}(\Omega)$, a sequence of positive weak solutions to \eqref{eq:ell:pb} such that $\|u_n\|_\infty \to +\infty$. Without loss of generality, we can assume that 
$$ \|u_n\|_\infty \ge \max\{t_1, 3\,C_1\,d_1^s\}, $$
where $t_1 > 0$ is the threshold from hypothesis \ref{f3} and $C_1, d_1$ are the constants from the boundary estimate in Theorem \ref{th:bdd}.

By the monotonicity condition $L'(t) < 0$ for $t > t_1$ coming from hypothesis \ref{f3}, the term $|L'(u_n)|^{-1}$ is well-defined on the open set $\{x \in \Omega : u_n(x) > t_1\}$. Let us take any $q \in \left( \frac{N}{2s}, \frac{N}{s} \right) \cap [2,+\infty)$. Note that we can always do it by the assumption $N>2s$. In order to estimate the $L^q$ norm of the nonlinearity, we decompose the integral as
$$ \int_{\Omega} |f(u_n)|^q \, dx = \int_{\Omega \cap \{u_n > t_1\}} |f(u_n)|^q \, dx + \int_{\Omega \cap \{u_n \leq t_1\}} |f(u_n)|^q \, dx. $$
For the first term, we can rewrite the integrand to exploit the integral bound from Lemma \ref{lem:3.1} as
\begin{align*}
	|f(u_n)|^q  &= \left( u_n^{\frac{N+2s}{N-2s}q - \left(\frac{2N}{N-2s} + 1\right)} L(u_n)^q |L'(u_n)|^{-1} \right) u_n^{\frac{2N}{N-2s} + 1} |L'(u_n)|.
\end{align*}
Then, let us define the auxiliary function for $t > t_1$:
$$ h(t) := t^{\frac{(N+2s)q - (2N + (N-2s))}{N-2s}} L(t)^q \, |L'(t)|^{-1}.$$
Since $|L'| \in RV_{-1}$ and $q > \frac{N}{2s}$, a direct computation on regular variation shows that $h \in RV_{\gamma}$ with index 
$$ \gamma = \frac{q(N+2s) - 2N}{N-2s} > 0. $$
Since $\gamma > 0$, we can apply the Uniform Convergence Theorem for regularly varying functions (see Theorem~1.5.2. in \cite{Bingham} and Corollary~3.5. in \cite{Cuesta_Pardo_MedJM}) to get
$$ \|h\|_{L^\infty([0,T])} \leq C \,  h(T) \quad \text{ for each } \ T\geq t_1. $$
Hence, using the uniform integral bound \eqref{u:fu:C} from Lemma \ref{lem:3.1}, we obtain
\begin{align*}
	\int_{\Omega} |f(u_n)|^q &\leq |\Omega| \|f\|_{L^\infty([0,t_1])}^q + \|h\|_{L^\infty([0,\|u_n\|_\infty])} \int_{\Omega} u_n^{\frac{2N}{N-2s}+1} |L'(u_n)| \\
	&\leq C \left( 1 + h(\|u_n\|_\infty) \right) \leq C h(\|u_n\|_\infty),
\end{align*}
where the last inequality holds for $n$ sufficiently large since $h(t) \to \infty$ when $t \to \infty$ as follows from Proposition~1.3.6. in \cite{Bingham}.

By the local elliptic regularity estimate (Theorem~\ref{th:local:reg}) and the fractional Sobolev embeddings (Theorem~\ref{th:fr:Sobolev:emb2}), we have
$$ \|u_n\|_{W^{s,q^*_s}(\Omega_d)} \le C \|u_n\|_{W^{2s,q}(\Omega_d)} \le C \|f(u_n)\|_{L^{q}(\Omega)} \le C h(\|u_n\|_\infty)^{1/q},$$
where we have crucially used the fact that $q\geq 2$.

Next, since we choose $q \in \left( \frac{N}{2s}, \frac{N}{s} \right)$, the fractional Sobolev exponent is such that $q^*_s = \frac{Nq}{N-qs} > N$. Therefore, applying Morrey embedding (Theorem \ref{th:fr:Sobolev:emb}.(iii)), it follows that $u_n$ is H\"older continuous in $\Omega_d$ with exponent $\alpha = s - N/q^*_s = 2s - N/q \in (0,1)$. In addition, there exists a constant $C > 0$ such that
\begin{equation*}\label{Morrey}
	|u_n(x) - u_n(y)| \le C \|u_n\|_{W^{s,q^*_s}(\Omega_d)} |x-y|^{2s - N/q}, \quad \forall x, y \in \Omega_d.
\end{equation*}

Now, let $x_n \in \Omega$ be a point where the maximum of the function~$u_n$ is attained. That is,
 $u_n(x_n) = \|u_n\|_\infty$. By the boundary estimate (Theorem \ref{th:bdd}) and the choice of $\|u_n\|_\infty \ge 3 \,C_1 d_1^s$, we know $x_n \in \Omega_d$. We define the ``peak-ball'' $B_n := B(x_n, R_n) \subset \Omega$, where the radius $R_n$ is chosen such that $u_n(x) \ge \frac{1}{2} \|u_n\|_\infty$ for all $x \in B_n$, and there exists a point $y_n \in \partial B_n$ such that $u_n(y_n) = \frac{1}{2} \|u_n\|_\infty$. Note that this is always possible due to the continuity of the solutions. In addition, $y_n\in \Omega_d$ too.

Evaluating the Hölder estimate at the pair $(x_n, y_n)$, and using the bound on the Sobolev norm in $\Omega_d$, we obtain
$$ \frac{1}{2} \|u_n\|_\infty = |u_n(x_n) - u_n(y_n)| \le C h(\|u_n\|_\infty)^{1/q} R_n^{2s - N/q}. $$
Rearranging this inequality to isolate the volume factor $R_n^N$, we find
$$ R_n^N \ge C \left( \frac{\|u_n\|_\infty}{h(\|u_n\|_\infty)^{1/q}} \right)^{\frac{Nq}{2sq - N}}. $$
By using the definition of $h$, we have
\begin{equation}\label{Rn:final}
	R_n^N \ge C \|u_n\|_\infty^{-\frac{2N}{N-2s}} \left( L(\|u_n\|_\infty)^{-1} \big( \|u_n\|_\infty |L'(\|u_n\|_\infty)| \big)^{1/q} \right)^{\frac{Nq}{2sq - N}}.
\end{equation}

On the other hand, since the function $t \mapsto t^{\frac{2N}{N-2s}+1}|L'(t)|$ belongs to the class $RV_{\frac{2N}{N-2s}}$ with a positive index of variation, we can apply again the Uniform Convergence Theorem for regularly varying functions (see Theorem~1.5.2. in \cite{Bingham} and Corollary~3.5. in \cite{Cuesta_Pardo_MedJM}) to get
$$ t^{\frac{2N}{N-2s}+1}\,|L'(t)| \geq C \, T^{\frac{2N}{N-2s}+1} \, |L'(T)|, \quad \forall t\in [T/2,T] \ \text{ and } \ T>t_1. $$
Therefore, since $u_n(x) \in [\|u_n\|_\infty/2,\|u_n\|_\infty]$ for each $x\in B_n$, it follows
$$ u_n^{\frac{2N}{N-2s}+1}(x)\,\big|L'(u_n(x))\big| \geq C \, \|u_n\|_\infty^{\frac{2N}{N-2s}+1} |L'(\|u_n\|_\infty)|, \quad \forall x \in B_n. $$
This, together with the radius estimate \eqref{Rn:final}, gives
\begin{align*}
	&\int_\Omega u_n^{\frac{2N}{N-2s}+1}|L'(u_n)| \ge \int_{B_n} u_n^{\frac{2N}{N-2s}+1}|L'(u_n)| \\
	&\hspace{5mm} \ge C \|u_n\|_\infty^{\frac{2N}{N-2s}+1} |L'(\|u_n\|_\infty)| \, R_n^N \\
	&\hspace{5mm}\ge C \|u_n\|_\infty |L'(\|u_n\|_\infty)| \left( L(\|u_n\|_\infty)^{-1} \big( \|u_n\|_\infty |L'(\|u_n\|_\infty)| \big)^{1/q} \right)^{\frac{Nq}{2sq - N}}.
\end{align*}

Finally, by applying Lemma \ref{lem:3.1} to have a uniform lower bound and rearranging the terms to simplify the powers, we conclude that there exists a constant $C > 0$ independent of $n$ such that
$$ \frac{\|u_n\|_\infty |L'(\|u_n\|_\infty)|}{L(\|u_n\|_\infty)^{\frac{N}{2s}}} \le C. $$
However, the structural hypothesis \eqref{hypothesis1:intro} states that this ratio must tend to $+\infty$ as $\|u_n\|_\infty \to +\infty$. This yields a contradiction, which comes from assuming that the sequence $\{u_n\}_n$ is unbounded in $L^\infty(\Omega)$.
\end{proof}

Next, let us prove the result in the strictly subcritical regime.

\begin{proof}[Proof of Theorem \ref{th:aprioriSubcritical:intro}]
First, let us note that the conclusions of Step 1 and Step 2 in the proof of the slightly subcritical case (Lemma~\ref{lem:3.1}) remain valid. That is, there exists a uniform constant $C>0$ such that
\begin{equation} \label{Step2estimate}
    \left| \frac{2N}{N-2s} \int_\Omega F(u) \, dx - \int_\Omega u f(u) \, dx \right| \le C.
\end{equation}

Next, from identity~\eqref{eq:usfF} in the Uniform Convergence Theorem for regularly varying functions (Theorem~\ref{usfF}) applied to $f\in RV_q$, we obtain the pointwise estimate
\begin{equation} \label{SubcriticalFunctEstimate}
    \frac{2N}{N-2s} F(t) - t f(t) \geq \frac{2^*_s-q}{2(q+1)}\,tf(t)-C_1,
\end{equation}
for some constant $C_1>0$ depending only on the nonlinearity $f$.

Now, combining \eqref{Step2estimate} and \eqref{SubcriticalFunctEstimate} yields
$$ [u]_{H^s(\Omega)}^2 \leq [u]_{H^s(\mathbb{R}^N)}^2 = \frac{1}{C_{N,s}} \int_\Omega u f(u) \, dx \leq C,$$
for a positive constant $C$ independent of the solution $u$.

Finally, thanks to the strictly subcritical growth condition, we can apply a bootstrap argument. Relying on the improvement of differentiability provided by Theorem~\ref{th:local:reg}, together with the fractional Sobolev and Morrey embeddings (Theorem~\ref{th:fr:Sobolev:emb}), we obtain the desired uniform $L^\infty(\Om)$ estimate in the interior of the domain. To conclude the proof, it is enough to recall the uniform $L^\infty(\Om)$ estimate close to the boundary given by Theorem~\ref{th:bdd}.
\end{proof}

\bibliographystyle{abbrv}
\bibliography{ref}
\end{document}